\pdfoutput=1
\documentclass[11pt,reqno]{amsart}

\usepackage[T1]{fontenc}
\usepackage[utf8]{inputenc}
\usepackage{lmodern}
\usepackage{amsmath,amssymb,amsthm}
\usepackage{mathtools}
\usepackage{booktabs}
\usepackage{longtable}
\usepackage{array}
\usepackage{natbib}
\usepackage{hyperref}
\hypersetup{colorlinks=true,linkcolor=blue,citecolor=blue,urlcolor=blue}
\usepackage[capitalize]{cleveref}

\providecommand{\tightlist}{
  \setlength{\itemsep}{0pt}\setlength{\parskip}{0pt}}

\newtheorem{theorem}{Theorem}[section]
\newtheorem{lemma}[theorem]{Lemma}
\newtheorem{corollary}[theorem]{Corollary}
\theoremstyle{definition}
\newtheorem{definition}[theorem]{Definition}
\theoremstyle{remark}
\newtheorem{remark}[theorem]{Remark}

\title{Star-Ordered Rotor Loops in Clifford Algebras}
\author{Nik Jenzer}
\address{Independent researcher}
\email{nik@jenzer.ch}

\begin{document}

\begin{abstract}
We consider, in a real Clifford algebra, a family of unit bivectors
$B_i = C S_i$ formed from $k+1$ pairwise anticommuting generators
$C, S_1, \ldots, S_k$ with square $-1$; we call this a \emph{star
family} with \emph{central generator} $C$ and \emph{leaf generators}
$S_i$. The $B_i$ satisfy $B_i^2 = -1$ and anticommute pairwise.
For the associated rotors
$U_i(\theta_i) = \cos\frac{\theta_i}{2} + B_i \sin\frac{\theta_i}{2}$
we study two objects: the ordered chain $P_k = U_1 \cdots U_k$ and the
\emph{star-ordered rotor loop} $\Theta_k = P_k \, Q_k$ with the
\emph{same-order reversion product}
$Q_k = \widetilde U_1 \cdots \widetilde U_k$. It is essential that
$Q_k$ carries the same index order as $P_k$ and not the reversed
one; for the reversed order one has trivially
$U_1 \cdots U_k \widetilde U_k \cdots \widetilde U_1 = 1$. The scalar
part of $\Theta_k$ therefore measures the \emph{order defect} of the
family. We prove two theorems. First, the scalar part of the chain
factorises,
$\langle P_k \rangle_0 = \prod_{i=1}^k \cos\frac{\theta_i}{2}$; this
statement essentially requires the star structure, and we give a
counterexample. Second, we derive a closed expression for
$\langle \Theta_k \rangle_0$, valid for every $k$, whose
coefficients obey a \emph{period-four sign law}
$\eta(m) = (-1)^{\lfloor m/2 \rfloor}$ and which admits the compact
form
$\langle \Theta_k \rangle_0 = \operatorname{Re}\!\big[(1-\mathrm{i}) \prod_i (\cos^2\frac{\theta_i}{2} + \mathrm{i}\sin^2\frac{\theta_i}{2})\big]$.
Unlike the first, this second theorem holds for every pairwise
anticommuting unit family without any further hypothesis. From the
compact form we obtain a polar form, a complete zero criterion, a
lacunary coefficient rule in the symmetric case, and a proof of
non-separability for $k \geq 2$. Two supplementary theorems extend the
result: to arbitrary permutations of the index order in the second
factor, in which each inversion within a subset flips its sign
contribution (under the star condition), and to families with mixed
squares $B_i^2 \in \{-1, +1\}$, for which a uniform closed subset
formula holds. All proofs use only anticommutation, reversion and the
symmetry of the scalar form; no physical interpretation is assumed or
claimed.
\end{abstract}

\maketitle

\tableofcontents

\subsection*{Document Governance}

\begin{longtable}{p{3.0cm}p{3.2cm}p{8.8cm}}
\hline
\textbf{Role} & \textbf{Person / Entity} & \textbf{Scope} \\
\hline
\endfirsthead
\hline
\textbf{Role} & \textbf{Person / Entity} & \textbf{Scope} \\
\hline
\endhead
\hline
\endfoot
\textbf{Accountable (A)} & Nik Jenzer & Question, alignment, release, publication, responsibility for the content \\
\textbf{Responsible (R)} & Perplexity \newline (Computer) & Refinement of the question, mathematical apparatus including verification, literature review, draft of the entire text \\
\textbf{Support (S)} & Nik Jenzer & Editorial guidelines, terminology decisions, language, connection to the prior work \\
\textbf{Support (S)} & Claude \newline (Sonnet, Opus) & Preliminary work in Volume 5 of the series "Winkel statt Zeit" (theorems $S5.10$ and $S5.11$ there in the local notation) \\
\textbf{Consulted (C)} & *** & *** \\
\textbf{Informed (I)} & *** & *** \\
\hline
\end{longtable}

\begin{quote}
\textbf{On authorship, explicitly.} The role distribution of this work
is reversed compared to the series ``Winkel statt Zeit,'' from whose
fifth volume the initial question originates: the content authorship
lies with the language model, the editorial control level with the
author. The theorems, their proofs, the generalizations over the prior
work, the verification and the text draft are the result of the model's
work. The author set the question, decided the terminology, guided the
text editorially, and granted release. Responsibility for the content
remains entirely with the author. All theorems of this work are
constructed so that they are verifiable independently of the question of
authorship: every proof is elementary and checkable by hand in a few
lines, and Appendix A describes a complete verification script that
confirms every claim symbolically or numerically; the script itself
resides in the underlying repository at Codeberg. We explicitly ask the
readership to take these checks as the actual standard of verification.
\end{quote}

\pagebreak

\subsection*{On Status Marking}

This work marks the status of every numbered statement, following the
convention of the series ``Winkel statt Zeit'' \citep{Jenzer2026}:

\begin{itemize}
\tightlist
\item
  \textbf{Type 1, established observational knowledge.} Statements
  supported by strong, multiply confirmed evidence. They are reproduced
  without qualification.
\item
  \textbf{Type 2, established mathematics.} Statements about structures
  that have been developed and tested in the literature. They are
  supported by literature references and are not re-derived here.
  Affected are Lemmata 2.1, 3.2 and 3.3 as well as the case \(k=2\) of
  Theorem 4.2.
\item
  \textbf{Type 3, own contribution.} Statements that are new to the best
  of our knowledge of the literature. Affected are Lemma 3.4, Theorem
  5.2 with all corollaries of Chapter 6, Theorem 5.5 with Corollary 5.6
  and Remark 5.7, Theorems 6.8 and 6.10 with Remarks 6.11 and 6.12, as
  well as Remark 4.4.
\end{itemize}

The remaining numbered statements (Corollary 2.2, Lemma 3.5 and Lemma
4.1) are elementary variants and direct consequences of the respectively
cited standard apparatus; they are given here with complete, short
proofs and claim no novelty. Statements of Type 1 of the series,
established observational knowledge, do not occur in this work, since it
contains no empirical claims.

\begin{center}\rule{0.5\linewidth}{0.5pt}\end{center}

\pagebreak

\section{Introduction}\label{introduction}

A product of rotors in a Clifford algebra is in general not commutative.
If one traverses such a product once in one order and once in the other,
a remainder arises. This work determines this remainder exactly, for a
specific, very well controllable configuration and for arbitrarily many
factors.

\subsection{The Object}\label{the-object}

Let \(\mathcal{C}\) be a real Clifford algebra and let
\(B_1, \ldots, B_k \in \mathcal{C}\) be bivectors with
\[B_i^2 = -1, \qquad B_i B_j = -B_j B_i \quad (i \neq j).\] For real
parameters \(\theta_1, \ldots, \theta_k\) we set
\[U_i = \cos\frac{\theta_i}{2} + B_i \sin\frac{\theta_i}{2}, \qquad \widetilde U_i = \cos\frac{\theta_i}{2} - B_i \sin\frac{\theta_i}{2},\]
where \(\widetilde{\;\cdot\;}\) denotes reversion. Each \(U_i\) is a
rotor: \(U_i \widetilde U_i = 1\).

We consider the \emph{ordered chain} \[
P_k = U_1 U_2 \cdots U_k
\] and, with the same index order, \[
Q_k = \widetilde U_1 \widetilde U_2 \cdots \widetilde U_k,
\] as well as their product
\[\Theta_k = P_k \, Q_k = (U_1 \cdots U_k)(\widetilde U_1 \cdots \widetilde U_k).\]

The crucial point, which we note right here because it would otherwise
be the most obvious objection: for the \emph{reversed} order in the
second factor one has trivially
\[(U_1 \cdots U_k)(\widetilde U_k \cdots \widetilde U_1) = 1,\] since
each rotor cancels immediately with its own reversion. The object
\(\Theta_k\) is thus \emph{not} \(P_k \widetilde P_k\), but the
\emph{same-order} version, and that is precisely where its content lies.
We call \(\Theta_k\) a \emph{star-ordered rotor loop} and
\(\langle \Theta_k \rangle_0\) its \emph{order defect}.

A second reading makes the same point from the other side. Since
\(\widetilde U_i(\theta_i) = U_i(-\theta_i)\), we have
\[\Theta_k = P_k(\theta_1, \ldots, \theta_k) \cdot P_k(-\theta_1, \ldots, -\theta_k),\]
i.e., the product of a chain with the same chain at reversed signs of
all parameters. In a commutative situation this product would be \(1\).
The scalar part of \(\Theta_k\) thus measures how far the family
\(\{B_i\}\) is from commutativity, and in a quantity that is
continuously controlled by the parameters \(\theta_i\).

\subsection{The Main Results}\label{the-main-results}

We place two theorems in a contrasting relationship.

\emph{Theorem 4.2 (Chain).} If the family \(\{B_i\}\) satisfies the
star condition formulated in Chapter 3, then
\[\langle P_k \rangle_0 = \prod_{i=1}^{k} \cos\frac{\theta_i}{2}.\]

\emph{Theorem 5.2 (Loop, Main Theorem).} For every pairwise
anticommuting unit family, without any further hypothesis,
\[\langle \Theta_k \rangle_0 \;=\; \sum_{m=0}^{k} \eta(m) \sum_{\substack{S \subseteq \{1,\ldots,k\} \\ |S| = m}} \; \prod_{i \in S} \sin^2\frac{\theta_i}{2} \; \prod_{i \notin S} \cos^2\frac{\theta_i}{2},\]
with the sign law
\[\eta(m) = (-1)^{m(m+3)/2} = (-1)^{\lfloor m/2 \rfloor} = \operatorname{Re}\!\big[(1-\mathrm{i})\,\mathrm{i}^m\big],\]
whose values have period four: \(+, +, -, -, +, +, -, -, \ldots\)

The contrast is the actual finding of this work and was surprising to
us. The chain formula, which looks simpler, is the more fragile one: it
breaks down as soon as the family carries a relation. The loop formula,
which looks more complicated, is the more robust one: its perturbation
terms cancel by themselves for parity reasons. Remark 4.4 gives an
explicit counterexample for the first point.

From Theorem 5.2 we obtain in Chapter 6 the compact form
\[\langle \Theta_k \rangle_0 = \operatorname{Re}\!\left[(1-\mathrm{i}) \prod_{i=1}^{k} \left(\cos^2\frac{\theta_i}{2} + \mathrm{i}\,\sin^2\frac{\theta_i}{2}\right)\right],\]
which brings the entire statement onto a single line and makes its
structure immediately visible: the result is built from local factors,
but due to the real-part projection is not multiplicatively separable. A
polar form, a complete zero criterion, a bound, a lacunary coefficient
rule in the symmetric case, and the proof of non-separability follow.

\subsection{What This Work Does Not
Claim}\label{what-this-work-does-not-claim}

We use the expression ``loop'' exclusively in the algebraic sense
established in Chapter 1.1: as a designation for the closed, ordered
word \(\Theta_k\). No connection form, no parallel transport, and no
curvature is involved. Where the word ``holonomy'' is used in the prior
work \citep{Jenzer2026}, it means the same algebraic quantity; we avoid it
here, because in differential geometry and gauge theory it is firmly
tied to parallel transport, and an echo of it would promise more than we
prove.

Likewise, no physical interpretation is assumed or claimed. Chapter 7
openly names from which modeling the question originates, and carefully
separates this from the mathematical statements.

\subsection{On Proof Economy}\label{on-proof-economy}

All proofs of this work are elementary. They use only three things: the
anticommutation relation, the behavior of reversion on bivectors, and
the symmetry of the scalar form,
\(\langle xy \rangle_0 = \langle yx \rangle_0\). In particular, we need
no induction over \(k\): the proof of Theorem 5.2 is a direct subset
expansion in which two groups of perturbation terms vanish
independently. We consider this a virtue and not a deficiency: the value
of the work lies in identifying the object \(\Theta_k\) and computing it
in closed form, not in the difficulty of the path.

\begin{center}\rule{0.5\linewidth}{0.5pt}\end{center}

\pagebreak

\section{Algebraic Framework}\label{algebraic-framework}

\begin{quote}
This chapter establishes notation and tools. It contains nothing new; it
serves to be able to trace every later computation back to an explicitly
named rule.
\end{quote}

\subsection{Clifford Algebra, Reversion, Scalar
Projection}\label{clifford-algebra-reversion-scalar-projection}

Let \(V\) be a real vector space of finite dimension \(n\) with a
non-degenerate quadratic form \(q\), and let
\(\mathcal{C} = \mathrm{Cl}(V, q)\) be the associated Clifford algebra,
generated by \(V\) under the relation \(v^2 = q(v)\) for \(v \in V\).
For the foundations we follow \citep{Lounesto2001} and \citep{Porteous1995}; for
the notation in geometric algebra \citep{DoranLasenby2003} as well as
\citep{HestenesSobczyk1984}.

For an orthonormal basis \(e_1, \ldots, e_n\) of \(V\), the products
\[e_A = e_{a_1} e_{a_2} \cdots e_{a_r}, \qquad A = \{a_1 < a_2 < \cdots < a_r\} \subseteq \{1, \ldots, n\},\]
with \(e_\emptyset = 1\) form a basis of \(\mathcal{C}\) as a real
vector space, \(\dim \mathcal{C} = 2^n\). We call \(e_A\) a \emph{basis
blade} of grade \(r = |A|\).

\renewcommand{\thedefinition}{D2.1}
\begin{definition}[Scalar Projection]\label{definition:D2-1}
 For
\(x = \sum_A x_A e_A \in \mathcal{C}\),
\(\langle x \rangle_0 := x_\emptyset \in \mathbb{R}\) is called the
\emph{scalar part} of \(x\); the map
\(\langle \cdot \rangle_0 : \mathcal{C} \to \mathbb{R}\) is the
\emph{scalar projection} or grade-0 projection. It is linear.

\end{definition}

\renewcommand{\thedefinition}{D2.2}
\begin{definition}[Reversion]\label{definition:D2-2}
 \emph{Reversion} is the unique
anti-automorphism \(\widetilde{\;\cdot\;}\) of \(\mathcal{C}\) with
\(\widetilde v = v\) for \(v \in V\); on a blade of grade \(r\) it acts
as \[\widetilde{e_A} = (-1)^{r(r-1)/2} e_A, \qquad r = |A|,\] and for a
product \(\widetilde{xy} = \widetilde y \, \widetilde x\) holds. The
grade sign \((-1)^{r(r-1)/2}\) is found in \citep[Eq.~92]{Hitzer2012}, \citep[Eq.~4.52]{DoranLasenby2003} and \citep[\S 22.1]{Lounesto2001}.

In particular, \(\widetilde B = -B\) for every bivector \(B\), since
\(r = 2\) gives \((-1)^{1} = -1\).

\end{definition}

\subsection{The Symmetry of the Scalar
Form}\label{the-symmetry-of-the-scalar-form}

The following lemma is standard, but it carries so much weight in this
work that we write it down explicitly.

\renewcommand{\thelemma}{2.1}
\begin{lemma}[Symmetry]\label{lemma:2-1}
 For all \(x, y \in \mathcal{C}\),
\[\langle x y \rangle_0 = \langle y x \rangle_0.\]

\end{lemma}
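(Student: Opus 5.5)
By bilinearity of \((x,y)\mapsto\langle xy\rangle_0-\langle yx\rangle_0\) and linearity of the scalar projection (Definition D2.1), I will reduce the claim to basis blades. Writing \(x=\sum_A x_A e_A\) and \(y=\sum_B y_B e_B\), it suffices to show \(\langle e_A e_B\rangle_0=\langle e_B e_A\rangle_0\) for all subsets \(A,B\subseteq\{1,\ldots,n\}\).

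First I compute the product of two basis blades. Using the anticommutation \(e_a e_b=-e_b e_a\) for \(a\neq b\) and \(e_a^2=q(e_a)\in\mathbb{R}\), I reorder and contract \(e_Ae_B\). This gives
\[
e_A e_B = \varepsilon_{A,B}\,\Big(\prod_{c\in A\cap B} q(e_c)\Big)\, e_{A\triangle B},
\]
with a sign \(\varepsilon_{A,B}\in\{\pm1\}\). In particular \(e_Ae_B\) has a nonzero scalar part only when \(A\triangle B=\emptyset\), that is, when \(A=B\). So for \(A\neq B\) both \(\langle e_Ae_B\rangle_0\) and \(\langle e_Be_A\rangle_0\) vanish (the same argument applies with the roles swapped), and the two sides agree.

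In the remaining case \(A=B\), the products \(e_Ae_B\) and \(e_Be_A\) are literally the same element \(e_Ae_A\). Their scalar parts therefore coincide trivially, and no sign bookkeeping is needed. For completeness, \(\langle e_Ae_A\rangle_0=(-1)^{r(r-1)/2}\prod_{a\in A}q(e_a)\), but this value is not required.

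The only step needing care is the first one: showing that the grade-zero component of \(e_Ae_B\) requires \(A=B\). Concretely, I must check that the reduction produces exactly the single blade \(e_{A\triangle B}\), using that indices in \(A\cap B\) cancel in pairs and those in the symmetric difference survive. This follows by moving each \(e_c\) with \(c\in A\cap B\) adjacent to its partner and contracting. Linearity then finishes the proof.
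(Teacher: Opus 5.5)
Your proposal is correct and follows the paper's proof essentially verbatim. Both reduce by bilinearity to basis blades, observe that $e_Ae_B$ is a multiple of $e_{A\triangle B}$ so that only $A=B$ can carry a scalar part, and note that in that case $e_Ae_B$ and $e_Be_A$ are the same element. Your explicit factor $\prod_{c\in A\cap B} q(e_c)$ is a harmless refinement of the paper's $\pm$, since it equals $\pm1$ for an orthonormal basis.
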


\begin{proof}
By bilinearity it suffices to check \(x = e_A\),
\(y = e_B\). By the anticommutation rules of the generators,
\(e_A e_B = \pm\, e_{A \triangle B}\), where \(\triangle\) denotes the
symmetric difference, and likewise
\(e_B e_A = \pm\, e_{A \triangle B}\). A basis blade is scalar if and
only if its index set is empty. Hence
\(\langle e_A e_B \rangle_0 \neq 0\) is possible only when
\(A \triangle B = \emptyset\), that is \(A = B\); in this case
\(e_A e_B = e_A e_A = e_B e_A\), and both sides agree trivially. In all
other cases both sides are zero. 
\end{proof}

\renewcommand{\thecorollary}{2.2}
\begin{corollary}[Conjugation Invariance]\label{corollary:2-2}
 If
\(g \in \mathcal{C}\) is invertible, then
\(\langle g x g^{-1} \rangle_0 = \langle x \rangle_0\) for all
\(x \in \mathcal{C}\).

\end{corollary}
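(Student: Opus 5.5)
The natural route is to reduce the claim to Lemma 2.1 (Symmetry) by a suitable choice of the two factors. Put $y := x g^{-1}$. Then
\[
g x g^{-1} = g \, y,
\]
and Lemma 2.1, applied to the pair $(g, y)$, gives
\[
\langle g\,(x g^{-1}) \rangle_0 = \langle (x g^{-1})\, g \rangle_0 .
\]
Associativity of the Clifford product together with $g^{-1} g = 1$ collapses the right-hand side to $\langle x \rangle_0$, which is the claim.

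Invertibility enters only to make $g^{-1}$ a two-sided inverse in $\mathcal{C}$. Since $\mathcal{C}$ is a finite-dimensional associative algebra, a one-sided inverse is automatically two-sided, so we can use $g^{-1} g = 1$ freely. Nothing else is required: no grade information about $g$ is used, and linearity of $\langle\cdot\rangle_0$ plays no role beyond what is already built into Lemma 2.1.

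There is no real obstacle here. The only point to get right is grouping the product as $g\cdot(xg^{-1})$ rather than $(gx)\cdot g^{-1}$. Either grouping works, since the second gives $\langle g^{-1} g x\rangle_0$ after one application of the lemma, but we commit to the first because it uses Lemma 2.1 exactly once.

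Finally, we note that for a rotor $U$ one has $U^{-1} = \widetilde U$. In that case the corollary reads $\langle U x \widetilde U \rangle_0 = \langle x \rangle_0$, which is presumably the form in which it will be used later.
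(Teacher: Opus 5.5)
Your proof is correct and is exactly the paper's argument: apply Lemma 2.1 to the pair $g$ and $xg^{-1}$, then use $xg^{-1}g = x$. One correction to your closing remark: the paper's main use of this corollary is in the Parity Lemma 3.4 with $g = B_i$ and $B_i^{-1} = -B_i$, not with rotors.
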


\begin{proof}
With Lemma 2.1, applied to \(g\) and \(x g^{-1}\):
\(\langle g \, (x g^{-1}) \rangle_0 = \langle (x g^{-1}) \, g \rangle_0 = \langle x \rangle_0\).
\(\blacksquare\)

Corollary 2.2 is the actual driver of Chapter 5. It allows one to infer
the vanishing of entire groups of terms from a symmetry, rather than
computing them individually.
\end{proof}

\subsection{Reversion and Rotors}\label{reversion-and-rotors}

\renewcommand{\thedefinition}{D2.3}
\begin{definition}[Rotor]\label{definition:D2-3}
 Let \(B \in \mathcal{C}\) be a
bivector with \(B^2 = -1\) and \(\theta \in \mathbb{R}\). Then
\[U(\theta) = \exp\!\left(\frac{\theta}{2} B\right) = \cos\frac{\theta}{2} + B \sin\frac{\theta}{2}\]
is called the \emph{rotor} generated by \(B\) with parameter \(\theta\).

The exponential series converges and, because \(B^2 = -1\), yields
exactly the given trigonometric form. We have
\(\widetilde{U(\theta)} = \cos\frac{\theta}{2} - B \sin\frac{\theta}{2} = U(-\theta)\)
and
\(U(\theta)\widetilde{U(\theta)} = \cos^2\frac{\theta}{2} - B^2 \sin^2\frac{\theta}{2} = 1\).

\end{definition}

\subsection{Notational Convention for Subset
Products}\label{notational-convention-for-subset-products}

\renewcommand{\thedefinition}{D2.4}
\begin{definition}[Subset Product]\label{definition:D2-4}
 Let
\(B_1, \ldots, B_k \in \mathcal{C}\) be given. For
\(S = \{i_1 < i_2 < \cdots < i_m\} \subseteq \{1, \ldots, k\}\) we set
\[B_S := B_{i_1} B_{i_2} \cdots B_{i_m}, \qquad B_\emptyset := 1.\] The
factors always stand in ascending index order. We write throughout
\[c_i := \cos\frac{\theta_i}{2}, \qquad s_i := \sin\frac{\theta_i}{2}, \qquad w_i := s_i^2 = \sin^2\frac{\theta_i}{2} \in [0,1]\]
and for a subset \(S\)
\[a_S := \prod_{i \in S} s_i \prod_{i \notin S} c_i.\]

\begin{center}\rule{0.5\linewidth}{0.5pt}\end{center}

\pagebreak

\end{definition}

\section{The Star Family}\label{the-star-family}

\begin{quote}
This chapter defines the configuration in question and provides three
lemmata: two known ones about subset products and a new one about the
vanishing of scalar parts at even length. The last is the key to the
main theorem.
\end{quote}

\subsection{Definition}\label{definition}

\renewcommand{\thedefinition}{D3.1}
\begin{definition}[Star Family]\label{definition:D3-1}
 Let \(C, S_1, \ldots, S_k\) be
pairwise anticommuting generators of a real Clifford algebra with
\[C^2 = S_i^2 = -1, \qquad C S_i = -S_i C, \qquad S_i S_j = -S_j S_i \ (i \neq j).\]
We call \(C\) the \emph{central generator} and the \(S_i\) the
\emph{leaf generators}. The bivectors
\[B_i := C S_i, \qquad i = 1, \ldots, k,\] form the \emph{star family}
generated by this configuration.

As a ground algebra, \(\mathcal{C} = \mathrm{Cl}_{0,k+1}\), the Clifford
algebra of a negative definite form in \(k+1\) variables, with
\(e_0 = C\), \(e_i = S_i\), suffices. This construction exists for every
\(k\) and therefore has no dimensional bound.

\end{definition}

\renewcommand{\thedefinition}{D3.2}
\begin{definition}[Anticommuting Unit Family]\label{definition:D3-2}
 A family
\(B_1, \ldots, B_k\) of bivectors of a real Clifford algebra is called
an \emph{anticommuting unit family} of type \(-1\) if
\[B_i^2 = -1 \quad (1 \le i \le k), \qquad B_i B_j = -B_j B_i \quad (i \neq j).\]

\end{definition}

\renewcommand{\thedefinition}{D3.3}
\begin{definition}[Star Condition]\label{definition:D3-3}
 An anticommuting unit family
satisfies the \emph{star condition} if
\[\langle B_S \rangle_0 = 0 \qquad \text{for all } S \neq \emptyset.\]

The choice of names is justified by Lemmata 3.1 and 3.5: every star
family is an anticommuting unit family and satisfies the star condition.
We do not investigate the converse; Chapter 8.4 names it as an Open
Front (OF 1).

\end{definition}

\subsection{Star Families Are Anticommuting Unit
Families}\label{star-families-are-anticommuting-unit-families}

\renewcommand{\thelemma}{3.1}
\begin{lemma}\label{lemma:3-1}
 For a star family, \(B_i^2 = -1\) and
\(B_i B_j = -B_j B_i\) for \(i \neq j\).

\end{lemma}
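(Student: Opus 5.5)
The plan is a direct computation from the defining relations of D3.1. We move generators past one another using only $CS_i=-S_iC$, $S_iS_j=-S_jS_i$ and $C^2=S_i^2=-1$, and we count the sign flips. Both claims reduce to short words in $C$ and the $S_i$.

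For the square, we compute
\[
B_i^2 = C S_i C S_i.
\]
Anticommuting the middle pair gives $S_iC=-CS_i$, so
\[
B_i^2 = -\,C C S_i S_i = -(C^2)(S_i^2) = -(-1)(-1) = -1.
\]

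For anticommutation with $i\neq j$, we bring both products into the common normal form $C^2 S_\ast S_\ast$. First,
\[
B_iB_j = C S_i C S_j = -\,C^2 S_i S_j = S_iS_j,
\]
using one swap $S_iC=-CS_i$ and $C^2=-1$. In exactly the same way,
\[
B_jB_i = -\,C^2 S_jS_i = S_jS_i.
\]
Since $i\neq j$, the leaf generators anticommute, so $S_jS_i=-S_iS_j$. Hence
\[
B_jB_i = -S_iS_j = -B_iB_j,
\]
which is the claim.

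There is no real obstacle. The only point needing care is consistent sign bookkeeping: in each reduction exactly one transposition of $C$ past a leaf occurs, so the two sign factors $(-1)$ from $C^2$ and from the swap combine identically for $B_iB_j$ and $B_jB_i$. The residual sign then comes solely from $S_iS_j=-S_jS_i$.

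We will also remark that each $B_i=CS_i$ is a product of two orthogonal generators, so it is indeed a bivector in $\mathrm{Cl}_{0,k+1}$. Together with the two computations above, this shows that every star family is an anticommuting unit family in the sense of D3.2.
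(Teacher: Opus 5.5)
Your proof is correct and follows essentially the same route as the paper: one swap $S_iC=-CS_i$ together with $C^2=S_i^2=-1$ gives $B_i^2=-1$ and $B_iB_j=S_iS_j$, $B_jB_i=S_jS_i=-S_iS_j$. Your closing remark that $CS_i$ is a bivector is a harmless addition that is not needed for the lemma as stated.
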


\begin{proof}
First, using \(S_i C = -C S_i\):
\[B_i^2 = (C S_i)(C S_i) = -C^2 S_i^2 = -(-1)(-1) = -1.\]

\pagebreak

Second, for \(i \neq j\):
\[B_i B_j = (C S_i)(C S_j) = -C^2 S_i S_j = S_i S_j,\]
\[B_j B_i = (C S_j)(C S_i) = -C^2 S_j S_i = S_j S_i = -S_i S_j.\] Hence
\(B_i B_j = -B_j B_i\). \(\blacksquare\)

\emph{In words:} The star configuration is exactly the constructive way
to generate arbitrarily many pairwise anticommuting unit bivectors; it
costs only a single additional generator, the common central generator
\(C\).
\end{proof}

\subsection{Subset Products}\label{subset-products}

The following two lemmata are standard and are noted here only in the
form we need. Lemma 3.2 is found, with the same sign, in \citep[Eq.~3.4]{LundholmSvensson2009}; the sign structure with period four is
elaborated in \citep[\S 3.2]{Shirokov2018} and \citep{MarchukShirokov2008}, its
use in scalar projections in \citep[Theorem~14]{Shirokov2018} and \citep{Mandolesi2024}.

\renewcommand{\thelemma}{3.2}
\begin{lemma}[Square of a Subset Product]\label{lemma:3-2}
 Let
\(B_1, \ldots, B_k\) be an anticommuting unit family of type \(-1\) and
\(S\) a subset with \(|S| = m\). Then \[B_S^2 = (-1)^{m(m+1)/2}.\]

\end{lemma}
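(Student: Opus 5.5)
We compute $B_S^2$ by moving the factors of the second copy of $B_S$ past those of the first, one at a time, using only the two defining relations $B_i^2=-1$ and $B_iB_j=-B_jB_i$ for $i\neq j$. Write $S=\{i_1<\cdots<i_m\}$, so that
\[B_S^2 = B_{i_1}\cdots B_{i_m}\,B_{i_1}\cdots B_{i_m}.\]
We argue by induction on $m$ and aim for the recursion
\[B_S^2 = (-1)^{m-1}\,B_{i_m}^2\,B_{S'}^2 = (-1)^{m}\,B_{S'}^2,\qquad S' = S\setminus\{i_m\}.\]

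First we isolate the last factor. In the word $B_{S'}B_{i_m}B_{S'}B_{i_m}$, the middle $B_{i_m}$ is moved rightwards past the $m-1$ factors of the second $B_{S'}$. Each of these factors carries an index different from $i_m$, so each transposition contributes a factor $-1$. This gives
\[B_S^2=(-1)^{m-1}B_{S'}^2B_{i_m}^2=(-1)^{m}B_{S'}^2.\]

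With the base case $B_\emptyset^2=1$, the recursion unrolls to
\[B_S^2=\prod_{j=1}^{m}(-1)^{j}=(-1)^{m(m+1)/2}.\]

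As a cross-check, the count can also be obtained non-inductively. Reversing the second copy costs $(-1)^{m(m-1)/2}$ from the anticommutation, which gives $B_S^2=(-1)^{m(m-1)/2}B_S\,B_{i_m}\cdots B_{i_1}$. Then $B_SB_{i_m}\cdots B_{i_1}=(-1)^m$ by successive cancellation of adjacent equal factors, each of which contributes $B_i^2=-1$. The exponents combine to $m(m-1)/2+m=m(m+1)/2$, as required.

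There is no real obstacle here. The only point needing care is the exact count of transpositions, namely $m-1$ rather than $m$ in the inductive step, since it is the factor $B_{i_m}^2=-1$ that supplies the extra sign. The indices in $S$ are distinct, so every transposition used is between different $B_i$ and the anticommutation relation applies each time.
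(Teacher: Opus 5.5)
Your proof is correct and is essentially the paper's argument. The paper derives the same recursion $B_S^2 = (-1)^m B_{S'}^2$ by peeling off the first index $i_1$, moving the second copy of $B_{i_1}$ leftwards past $m-1$ factors, whereas you peel off the last index $i_m$; it then unrolls to $(-1)^{1+2+\cdots+m}$ exactly as you do, and your non-inductive cross-check by reversing the second copy is a valid extra that the paper does not include.
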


\begin{proof}
We show the recursion. Let \(S = \{i_1 < \cdots < i_m\}\)
and \(S' = S \setminus \{i_1\}\). In the product
\[B_S^2 = \big(B_{i_1} B_{i_2} \cdots B_{i_m}\big)\big(B_{i_1} B_{i_2} \cdots B_{i_m}\big)\]
we move the second factor \(B_{i_1}\) to the left. It passes the \(m-1\)
factors \(B_{i_2}, \ldots, B_{i_m}\), which yields the sign
\((-1)^{m-1}\), and then meets \(B_{i_1}\), which contributes the factor
\(-1\) because \(B_{i_1}^2 = -1\). It remains
\[B_S^2 = (-1)^{m-1} \cdot (-1) \cdot B_{S'}^2 = (-1)^{m} B_{S'}^2.\]
With \(B_\emptyset^2 = 1\) it follows by iteration
\[B_S^2 = (-1)^{1+2+\cdots+m} = (-1)^{m(m+1)/2}. \qquad \blacksquare\]

The values are \(+1, -1, -1, +1, +1, -1, -1, \ldots\) for
\(m = 0, 1, 2, 3, \ldots\), i.e., periodic with period four.
\end{proof}

\renewcommand{\thelemma}{3.3}
\begin{lemma}[Reduction of a Product of Two Subset Products]\label{lemma:3-3}
 Let
\(B_1, \ldots, B_k\) be an anticommuting unit family of type \(-1\). For
arbitrary \(S, T \subseteq \{1, \ldots, k\}\) there exists a sign
\(\sigma(S,T) \in \{+1, -1\}\) with
\[B_S B_T = \sigma(S,T) \, B_{S \triangle T}.\]

\end{lemma}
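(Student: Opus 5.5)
The plan is to sort the concatenated word $B_S B_T$ into ascending index order, tracking only the sign of each transposition. Write $B_S B_T$ as the word $B_{i_1}\cdots B_{i_m} B_{j_1}\cdots B_{j_n}$ with $S=\{i_1<\cdots<i_m\}$ and $T=\{j_1<\cdots<j_n\}$. Sorting it by adjacent swaps (a bubble sort) uses two rules from Definition D3.2. Each swap of two adjacent factors with distinct indices multiplies the word by $-1$, by anticommutation. Equal indices never have to pass each other: within $S$ and within $T$ the indices are already distinct and ascending, so a factor from $S$ only ever crosses factors from $T$.

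The concrete procedure is as follows.

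\textbf{Step 1.} Move the factors of $T$ leftward, one at a time, starting with $B_{j_1}$. Each $B_{j}$ passes every $B_i$ with $i\in S$ and $i>j$ that still stands to its left. It stops either at its correct sorted position, or immediately to the right of $B_j$ itself when $j\in S\cap T$.

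\textbf{Step 2.} Whenever $B_jB_j$ becomes adjacent, replace it by $B_j^2=-1$. This deletes the index $j$ and contributes one scalar factor $-1$.

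\textbf{Step 3.} Collect the signs. The total sign is
\[
(-1)^{\#\{(i,j)\in S\times T:\ i>j\}}\,(-1)^{|S\cap T|}
\]
up to the precise bookkeeping of positions. The only thing that matters is that it lies in $\{\pm1\}$, since it is a product of factors $\pm1$.

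\textbf{Step 4.} The surviving factors are exactly those with indices in $S\cup T$ minus the doubled ones, namely $S\triangle T$. They now stand in ascending order, so the word equals $B_{S\triangle T}$ by Definition D2.4. Define $\sigma(S,T)$ to be the accumulated sign.

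For rigour I would phrase this as an induction on $|T|$. The base case $T=\emptyset$ gives $\sigma=1$. For the inductive step, write $B_T=B_{j_1}B_{T'}$ with $T'=T\setminus\{j_1\}$. First reduce $B_SB_{j_1}=\pm B_{S\triangle\{j_1\}}$ by moving $B_{j_1}$ left past the factors with larger index; if $j_1\in S$, cancel it using $B_{j_1}^2=-1$. Then apply the induction hypothesis to $B_{S\triangle\{j_1\}}B_{T'}$, and use $(S\triangle\{j_1\})\triangle T'=S\triangle T$.

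The only step requiring care is the single-element case $B_SB_j=\pm B_{S\triangle\{j\}}$. There I must check that $B_j$ meets its partner as an adjacent pair and never crosses a factor with the same index. This holds because $S$ is ordered, so everything left of the stopping point has smaller index and everything passed has larger index. The argument is otherwise routine, and an explicit formula for $\sigma$ is not needed for the lemma.
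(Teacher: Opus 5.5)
Your proof is correct and takes the same route as the paper: reorder the word $B_SB_T$ using sign-costing swaps of factors with distinct indices, and remove each doubled index via $B_j^2=-1$. Your induction on $|T|$ makes the paper's informal ``push equal pairs together'' step rigorous. Your Step 3 formula needs no ``up to bookkeeping'' hedge, since it is exact, $\sigma(S,T)=(-1)^{\#\{(i,j)\in S\times T:\,i>j\}+|S\cap T|}$, and for $S=T$ it reproduces Lemma 3.2's $(-1)^{m(m+1)/2}$.
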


\begin{proof}
In the word \(B_S B_T\), each index \(i \in S \cap T\)
occurs exactly twice, each index \(i \in S \triangle T\) exactly once.
Since all factors anticommute pairwise, each pair of equal indices can
be pushed together by swaps; each swap costs a sign, and the
pushed-together pair \(B_i B_i = -1\) costs another. After eliminating
all pairs, a product of the factors with index in \(S \triangle T\)
remains; this is brought into ascending order by further swaps. In total
\(\pm B_{S \triangle T}\) results. \(\blacksquare\)

We need the exact value of \(\sigma(S,T)\) nowhere. For the diagonal
case \(S = T\), Lemma 3.2 immediately gives
\(\sigma(S,S) = (-1)^{|S|(|S|+1)/2}\).
\end{proof}

\subsection{Vanishing at Even
Length}\label{vanishing-at-even-length}

The following lemma is the core of this work. It requires no star
condition, no basis representation, and no computation with signs; it
follows solely from Corollary 2.2.

\renewcommand{\thelemma}{3.4}
\begin{lemma}[Parity Lemma]\label{lemma:3-4}
 Let \(B_1, \ldots, B_k\) be an
anticommuting unit family of type \(-1\) and
\(D \subseteq \{1,\ldots,k\}\) with \(D \neq \emptyset\) and \(|D|\)
\emph{even}. Then \[\langle B_D \rangle_0 = 0.\]

\end{lemma}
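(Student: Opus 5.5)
The plan is to find a single element $g$ that anticommutes with $B_D$. Then Corollary 2.2 forces the scalar part to vanish. Concretely, if $g B_D g^{-1} = -B_D$ for some invertible $g$, Corollary 2.2 gives
\[\langle B_D\rangle_0 = \langle g B_D g^{-1}\rangle_0 = -\langle B_D\rangle_0,\]
hence $\langle B_D\rangle_0 = 0$. No star condition and no basis expansion are needed. The whole task is to choose $g$ from the family itself.

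I would take $g = B_j$ for some index $j \in D$, which exists because $D \neq \emptyset$. This $g$ is invertible with $B_j^{-1} = -B_j$, since $B_j^2 = -1$. The main step is to compute the sign picked up when $B_j$ is passed through the word
\[B_D = B_{i_1}\cdots B_{i_m}, \qquad m = |D|.\]
Moving $B_j$ from the left end to the right end crosses each factor $B_{i}$ with $i \neq j$ at a cost of $-1$. It also crosses the single factor $B_j$ itself at no cost. This gives
\[B_j B_D = (-1)^{m-1} B_D B_j.\]
For $m$ even this is $B_j B_D = -B_D B_j$, that is, $B_j B_D B_j^{-1} = -B_D$.

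Combining the two steps gives $\langle B_D\rangle_0 = -\langle B_D\rangle_0 = 0$. An alternative route avoids inverses: applying Lemma 2.1 to $x = B_j$ and $y = B_j^{-1}B_D$ gives the same identity.

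The only point needing care is the parity count. The factor $B_j$ inside $B_D$ must not be counted as an anticommuting partner, which is exactly why the exponent is $m-1$ rather than $m$. It is also why the argument fails for odd $m$: then $B_D$ commutes with every $B_j$, $j\in D$, and one would instead need an index outside $D$.
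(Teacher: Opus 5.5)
Your proof is correct and is essentially the paper's own argument: conjugate $B_D$ by $B_j$ for some $j \in D$, note that the sign is $(-1)^{|D|-1} = -1$, and apply Corollary 2.2 to get $\langle B_D\rangle_0 = -\langle B_D\rangle_0$. The only cosmetic difference is that you get the sign by commuting $B_j$ through the word, while the paper applies the conjugation automorphism to each factor separately.
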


\begin{proof}
We choose an \(i \in D\); this is possible since
\(D \neq \emptyset\). Because \(B_i^2 = -1\), \(B_i\) is invertible with
\(B_i^{-1} = -B_i\). Conjugation by \(B_i\) acts on the generators of
the family as
\[B_i B_j B_i^{-1} = -B_j \ (j \neq i), \qquad B_i B_i B_i^{-1} = B_i,\]
since for \(j \neq i\) we have \(B_i B_j = -B_j B_i\) and therefore
\(B_i B_j B_i^{-1} = -B_j B_i B_i^{-1} = -B_j\).

Since conjugation is an algebra automorphism, it acts on
\(B_D = B_{d_1} \cdots B_{d_{|D|}}\) factor by factor. Exactly one
factor, namely \(B_i\), remains unchanged; the remaining \(|D| - 1\)
change sign. Thus \[B_i B_D B_i^{-1} = (-1)^{|D| - 1} B_D = -B_D,\]
because \(|D|\) is even. By Corollary 2.2 the scalar part is invariant
under conjugation, hence
\[\langle B_D \rangle_0 = \langle B_i B_D B_i^{-1} \rangle_0 = \langle -B_D \rangle_0 = -\langle B_D \rangle_0,\]
and therefore \(\langle B_D \rangle_0 = 0\). \(\blacksquare\)

\emph{In short:} A product of an even, positive number of pairwise
anticommuting unit bivectors can never carry a scalar part. For odd
numbers this does not hold; therein lies the difference between Theorem
4.2 and Theorem 5.2.
\end{proof}

\subsection{Star Families Satisfy the Star
Condition}\label{star-families-satisfy-the-star-condition}

\renewcommand{\thelemma}{3.5}
\begin{lemma}\label{lemma:3-5}
 Every star family satisfies the star condition,
i.e., \(\langle B_S \rangle_0 = 0\) for all \(S \neq \emptyset\).

\end{lemma}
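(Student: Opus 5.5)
By Lemma 3.4, every nonempty $S$ with $|S|$ even already has $\langle B_S \rangle_0 = 0$; this holds for any anticommuting unit family, and a star family is one by Lemma 3.1. So only odd $|S| = m \geq 1$ needs work. For these I will compute $B_S$ explicitly in terms of the generators $C, S_1, \ldots, S_k$ and show that it is, up to sign, a basis blade of nonzero grade in $\mathrm{Cl}_{0,k+1}$ with $e_0 = C$, $e_i = S_i$.

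The key step is to collapse the product. From the computation in the proof of Lemma 3.1, $B_i B_j = S_i S_j$ for $i \neq j$. Write $S = \{i_1 < \cdots < i_m\}$ and group the factors of $B_S$ in consecutive pairs:
\[
B_S = (B_{i_1} B_{i_2})(B_{i_3} B_{i_4}) \cdots .
\]
Each pair equals $S_{i_{2r-1}} S_{i_{2r}}$. For even $m$ this gives $B_S = S_{i_1} \cdots S_{i_m}$, a grade-$m$ blade, though we do not need this case. For odd $m$ there is one leftover factor $B_{i_m} = C S_{i_m}$, so
\[
B_S = S_{i_1} \cdots S_{i_{m-1}} \, C \, S_{i_m} = \pm\, C\, S_{i_1} \cdots S_{i_m}.
\]
The sign comes from moving $C$ to the front past $m-1$ anticommuting generators.

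The generators $C, S_{i_1}, \ldots, S_{i_m}$ are distinct and pairwise anticommuting. Their product is therefore $\pm e_A$ with $A = \{0, i_1, \ldots, i_m\}$, a basis blade of grade $m+1 \geq 2$. By Definition D2.1 its scalar part is zero, since $A \neq \emptyset$. Combined with the even case, this proves the lemma.

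The only real obstacle is the general setting. The star family may sit in an arbitrary real Clifford algebra, where $C, S_i$ need not be orthonormal basis vectors, so "basis blade" cannot be quoted directly. I would handle this with the same conjugation trick as in Lemma 3.4, now applied to the odd-length product $X = C S_{i_1} \cdots S_{i_m}$, which has $m+1$ factors. Choose a leaf generator $S_j$ with $j \notin S$ if one exists. It anticommutes with all $m+1$ factors, and $m+1$ is even, so $S_j$ commutes with $X$; this gives no information, so that choice fails.

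Instead, conjugate by a generator inside the product, say $C$. It fixes $C$ and flips the sign of the other $m$ factors. Since $m$ is odd, $C X C^{-1} = (-1)^m X = -X$. By Corollary 2.2, $\langle X \rangle_0 = -\langle X \rangle_0$, hence $\langle X \rangle_0 = 0$.

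This argument works in any real Clifford algebra and needs no choice of basis. It also makes clear where the star structure enters: the central generator $C$ appears in $B_S$ only an odd number of times when $m$ is odd. That is exactly what fails for a general anticommuting unit family with a relation, such as the counterexample mentioned for Remark 4.4.
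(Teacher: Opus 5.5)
Your proof is correct, and it takes a partly different route from the paper's.

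\textbf{The paper's route.} It treats all $m\ge 1$ in one computation. It moves every $C$ in $B_S=(CS_{i_1})\cdots(CS_{i_m})$ to the left and gets $B_S=(-1)^{m(m-1)/2}C^m S_{i_1}\cdots S_{i_m}$. It then reduces $C^m$ to $\pm C^{m \bmod 2}$ and reads off a basis blade of $\mathrm{Cl}_{0,k+1}$ with nonempty index set.

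\textbf{Your route.} Your odd-case collapse via $B_iB_j=S_iS_j$ reaches the same normal form, so your first argument is essentially the paper's. The differences are elsewhere:
\begin{itemize}
\item For even $m$ you use the parity lemma 3.4 instead of the explicit blade $S_{i_1}\cdots S_{i_m}$.
\item For odd $m$ you add a basis-free conjugation argument. It uses only that $C$ is invertible and anticommutes with every $S_i$, together with Corollary 2.2. It therefore covers arbitrary elements $C,S_i$ satisfying the relations, not just orthonormal vectors.
\end{itemize}
When $C,S_i$ are vectors of $V$, anticommutation and square $-1$ already make them orthonormal and extendable to an orthonormal basis. So the paper's blade argument is also valid in that case, and your worry about the general setting matters only for non-vector generators.

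You can drop the collapse entirely: $CB_iC^{-1}=-B_i$ for every $i$, hence $CB_SC^{-1}=(-1)^mB_S$ directly.

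\textbf{What each route buys.} Yours isolates what the star structure really contributes: an invertible element that anticommutes with the entire family. The triangle family cannot have one, because $B_1B_2B_3=-1$ would force it to vanish. The paper's route gives more than the vanishing of the scalar part: an explicit normal form of every $B_S$ as a signed blade of positive grade.
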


\begin{proof}
Let \(S = \{i_1 < \cdots < i_m\}\) with \(m \geq 1\). We
collect in \[B_S = (C S_{i_1})(C S_{i_2}) \cdots (C S_{i_m})\] all
factors \(C\) to the left. The \(j\)-th factor \(C\) must pass the
\(j-1\) leaf generators standing before it, which costs \((-1)^{j-1}\).
In total
\[B_S = (-1)^{0+1+\cdots+(m-1)} \, C^m \, S_{i_1} S_{i_2} \cdots S_{i_m} = (-1)^{m(m-1)/2} \, C^m \, S_{i_1} \cdots S_{i_m}.\]
Since \(C^2 = -1\), \(C^m = (-1)^{\lfloor m/2 \rfloor} C^{m \bmod 2}\).
Thus \(B_S\) is, up to a sign, equal to
\[\begin{cases} S_{i_1} \cdots S_{i_m}, & m \text{ even},\\[2pt] C\, S_{i_1} \cdots S_{i_m}, & m \text{ odd}.\end{cases}\]
In both cases this is a basis blade of the algebra
\(\mathrm{Cl}_{0,k+1}\) with non-empty index set, namely \(S\) or
\(\{C\} \cup S\); in the even case \(m \geq 2\), so
\(S \neq \emptyset\), and in the odd case the index set is non-empty
anyway. A basis blade with non-empty index set has scalar part zero.
\(\blacksquare\)

\emph{In words:} The star construction is built so that no subset
product ever degenerates to a number. It is relation-free.

\begin{center}\rule{0.5\linewidth}{0.5pt}\end{center}

\pagebreak
\end{proof}

\section{Scalar Part of the Ordered
Chain}\label{scalar-part-of-the-ordered-chain}

\begin{quote}
This chapter treats the simpler of the two statements and shows that it
is the more fragile one.
\end{quote}

\subsection{Expansion of the Chain}\label{expansion-of-the-chain}

\renewcommand{\thelemma}{4.1}
\begin{lemma}[Subset Expansion]\label{lemma:4-1}
 For an anticommuting unit family
of type \(-1\),
\[P_k = U_1 U_2 \cdots U_k = \sum_{S \subseteq \{1,\ldots,k\}} a_S \, B_S, \qquad Q_k = \widetilde U_1 \cdots \widetilde U_k = \sum_{T \subseteq \{1,\ldots,k\}} (-1)^{|T|} a_T \, B_T.\]

\end{lemma}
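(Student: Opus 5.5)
The plan is to expand the product $U_1 U_2 \cdots U_k$ directly by distributivity. Each factor is a sum of two terms, $U_i = c_i \cdot 1 + s_i B_i$, so the product is a sum of $2^k$ words. Each word is labelled by the subset $S \subseteq \{1,\ldots,k\}$ of indices at which the second summand $s_i B_i$ was chosen.

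Because the factors $U_1, \ldots, U_k$ are multiplied left to right in ascending index order, the word belonging to $S$ is the product of the scalars $c_i$ ($i \notin S$) and $s_i$ ($i \in S$), times the bivectors $B_i$, $i \in S$, in ascending index order. The scalars are real and commute with everything, so they can be pulled out as the coefficient $\prod_{i\in S} s_i \prod_{i\notin S} c_i = a_S$. The remaining bivector factor is exactly $B_S$ in the sense of Definition D2.4. No reordering of the $B_i$ is needed, so no anticommutation sign appears, and
\[P_k=\sum_S a_S B_S\]
follows. I will phrase this as a short induction on $k$: assuming $P_{k-1} = \sum_{S \subseteq \{1,\ldots,k-1\}} a_S B_S$, multiply on the right by $c_k + s_k B_k$. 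Right multiplication by $B_k$ appends the largest index, so $B_S B_k = B_{S\cup\{k\}}$ with ascending order preserved.

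For $Q_k$ I will use $\widetilde U_i = U_i(-\theta_i)$, as in Definition D2.3. This amounts to replacing each $s_i$ by $-s_i$ while leaving $c_i$ unchanged, so $a_T$ becomes $(-1)^{|T|} a_T$. Equivalently, I can repeat the same distributive expansion with $\widetilde U_i = c_i - s_i B_i$, which gives one factor $-1$ per chosen index. The ascending order of $\widetilde U_1\cdots\widetilde U_k$ again yields $B_T$ without reordering.

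There is no real obstacle here. The only point to state carefully is that the order of the factors in the word coincides with the ascending convention of D2.4, so that neither the anticommutation relation nor $B_i^2=-1$ is ever used. The lemma in fact holds for arbitrary elements $B_i$.
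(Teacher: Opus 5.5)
Your proposal is correct and matches the paper's proof: expand $\prod_i (c_i + s_i B_i)$ by distributivity, label each term by the subset $S$ of chosen second summands, and note that the ascending factor order yields $B_S$ with coefficient $a_S$; for $Q_k$, the $-s_i$ terms contribute $(-1)^{|T|}$. Your optional induction phrasing and your remark that neither anticommutation nor $B_i^2=-1$ is used are both accurate, and neither changes the argument.
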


\begin{proof}
Expanding \(\prod_{i=1}^k (c_i + s_i B_i)\) yields, for
each choice of a subset \(S\) of those indices where the second summand
is taken, exactly one term. Since the factors stand in ascending index
order, the corresponding \(B_i\) also appear in ascending order, i.e.,
as \(B_S\) in the sense of Definition D2.4, with the prefactor
\(\prod_{i \in S} s_i \prod_{i \notin S} c_i = a_S\). For \(Q_k\) the
same argument is applied to \(\prod_{i=1}^k (c_i - s_i B_i)\), where
each chosen second summand contributes an additional minus sign, in
total \((-1)^{|T|}\). 
\end{proof}

\subsection{The Theorem}\label{the-theorem}

\renewcommand{\thetheorem}{4.2}
\begin{theorem}[Chain Formula]\label{theorem:4-2}
 Let \(B_1, \ldots, B_k\) be an
anticommuting unit family of type \(-1\) that satisfies the star
condition, in particular a star family. Then for all
\(\theta_1, \ldots, \theta_k \in \mathbb{R}\)
\[\langle P_k \rangle_0 = \prod_{i=1}^{k} \cos\frac{\theta_i}{2}.\]

\end{theorem}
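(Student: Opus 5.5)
The plan is a direct application of the subset expansion of Lemma 4.1 combined with linearity of the scalar projection (Definition D2.1). By Lemma 4.1,
\[
P_k = \sum_{S \subseteq \{1,\ldots,k\}} a_S \, B_S ,
\]
so applying \(\langle\cdot\rangle_0\) term by term gives
\[
\langle P_k \rangle_0 = \sum_{S} a_S \, \langle B_S \rangle_0 .
\]
No induction over \(k\) is needed; the expansion is already in closed form.

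Next I would split the sum into the term \(S = \emptyset\) and the terms \(S \neq \emptyset\). The star condition (Definition D3.3), which holds for every star family by Lemma 3.5, says precisely that \(\langle B_S \rangle_0 = 0\) for \(S \neq \emptyset\). So all those terms drop out, and only the empty subset survives. There \(B_\emptyset = 1\), hence \(\langle B_\emptyset \rangle_0 = 1\), and
\[
a_\emptyset = \prod_{i=1}^k c_i = \prod_{i=1}^k \cos\frac{\theta_i}{2},
\]
which is the claim.

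I expect essentially no obstacle. The one point worth making explicit is why the star condition is needed at all. The Parity Lemma 3.4 already kills every even \(|S| \geq 2\) for an arbitrary anticommuting unit family, but it says nothing about odd \(|S|\). For instance, with \(|S| = 3\) a product \(B_1B_2B_3\) can equal \(\pm 1\) in a relation-carrying family. The star hypothesis is exactly what removes these odd contributions, and this is the place where the statement would break, in line with the announced counterexample of Remark 4.4.

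Strictly, the proof only needs the star condition for odd \(|S|\), since the even case follows from Lemma 3.4. I would note this in a remark, but the proof itself can simply invoke Definition D3.3 for all \(S \neq \emptyset\).
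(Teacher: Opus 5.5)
Your proposal is correct and is exactly the paper's proof: the subset expansion of Lemma 4.1, linearity of \(\langle\cdot\rangle_0\), the star condition removing every \(S \neq \emptyset\), and the surviving term \(a_\emptyset \langle B_\emptyset\rangle_0 = \prod_i \cos\frac{\theta_i}{2}\). Your side remark is also correct: only odd \(|S|\) actually needs the star hypothesis, because Lemma 3.4 already handles even \(|S| \geq 2\). This parallels the weakening the paper itself notes in the proof of Theorem 5.5.
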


\begin{proof}
By Lemma 4.1 and the linearity of the scalar projection,
\[\langle P_k \rangle_0 = \sum_{S} a_S \langle B_S \rangle_0.\] By the
star condition, \(\langle B_S \rangle_0\) vanishes for every
\(S \neq \emptyset\). The term \(S = \emptyset\) with
\(B_\emptyset = 1\), \(\langle 1 \rangle_0 = 1\) and
\(a_\emptyset = \prod_i c_i\) remains. 
\end{proof}

\renewcommand{\theremark}{4.3}
\begin{remark}[State of the Literature]\label{remark:4-3}
 For \(k = 2\) the
statement is known; it appears in \citep[Eq.~2.149]{DoranLasenby2003} in
the language of rotor composition. The generalization to arbitrary \(k\)
is an immediate consequence and may be considered folklore. We do not
present it as an own contribution but because it is needed for the
contrast with Theorem 5.2. In the prior work it appears as Theorem
\(S5.10\) \citep{Jenzer2026}.

\end{remark}

\subsection{The Star Condition Is Not
Dispensable}\label{the-star-condition-is-not-dispensable}

One might conjecture that mere pairwise anticommutation with unit
squares suffices for Theorem 4.2. This is false.

\renewcommand{\theremark}{4.4}
\begin{remark}[Counterexample]\label{remark:4-4}
 In \(\mathrm{Cl}_{0,3}\) with
generators \(e_1, e_2, e_3\), \(e_i^2 = -1\), consider the
\emph{triangle family}
\[B_1 = e_1 e_2, \qquad B_2 = e_1 e_3, \qquad B_3 = e_2 e_3.\] This is
an anticommuting unit family of type \(-1\): each \(B_i\) is a bivector
with \(B_i^2 = -1\), and any two of them share exactly one index, which
forces anticommutation. It possesses, however, no common generator, and
\[B_1 B_2 B_3 = (e_1 e_2)(e_1 e_3)(e_2 e_3) = -e_1 e_1 e_2 e_3 (e_2 e_3) = (e_2 e_3)(e_2 e_3) = -1,\]
so \(\langle B_{\{1,2,3\}} \rangle_0 = -1 \neq 0\). The star condition
is violated, and the chain formula breaks:
\[\langle P_3 \rangle_0 = c_1 c_2 c_3 + s_1 s_2 s_3 \langle B_1 B_2 B_3 \rangle_0 = c_1 c_2 c_3 - s_1 s_2 s_3 \;\neq\; \prod_{i=1}^3 \cos\frac{\theta_i}{2}\]
for all parameters with \(s_1 s_2 s_3 \neq 0\). The remaining part of
the computation stays valid: by Lemma 3.4 the terms with
\(|S| \in \{2\}\) vanish anyway, and \(|S| = 1\) yields bivectors
without scalar part.

It is exactly this counterexample that Theorem 5.2 survives. Chapter 5
shows why.

\begin{center}\rule{0.5\linewidth}{0.5pt}\end{center}

\pagebreak

\end{remark}

\section{The Closed Scalar Formula}\label{the-closed-scalar-formula}

\begin{quote}
This chapter contains the main theorem. The proof is a direct subset
expansion with two independent vanishing arguments. It requires no
induction and no star condition.
\end{quote}

\subsection{Sign Law}\label{sign-law}

\renewcommand{\thedefinition}{D5.1}
\begin{definition}[Period-Four Sign Law]\label{definition:D5-1}
 We set for
\(m \in \mathbb{N}_0\)
\[\eta(m) := (-1)^{m} \cdot (-1)^{m(m+1)/2} = (-1)^{m(m+3)/2}.\]

\end{definition}

\renewcommand{\thelemma}{5.1}
\begin{lemma}[Three Representations]\label{lemma:5-1}
 For all
\(m \in \mathbb{N}_0\),
\[\eta(m) = (-1)^{m(m+3)/2} = (-1)^{\lfloor m/2 \rfloor} = \operatorname{Re}\!\big[(1-\mathrm{i}) \, \mathrm{i}^m\big],\]
and \(\eta\) is periodic with period four:
\(\eta(0), \eta(1), \eta(2), \eta(3) = +1, +1, -1, -1\).

\end{lemma}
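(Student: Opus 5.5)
The plan is to establish all three representations through the common period-four structure: each expression depends only on $m \bmod 4$, so it suffices to check $m = 0,1,2,3$. The first equality, $(-1)^{m}(-1)^{m(m+1)/2} = (-1)^{m(m+3)/2}$, is immediate from adding exponents: $m + m(m+1)/2 = (m^2+3m)/2$.

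Next I will show that each expression depends only on $m \bmod 4$.
\begin{itemize}
\item For $(-1)^{m(m+3)/2}$, replacing $m$ by $m+4$ changes the exponent by
\[\frac{(m+4)(m+7) - m(m+3)}{2} = \frac{8m+28}{2} = 4m+14,\]
which is even.
\item For $(-1)^{\lfloor m/2 \rfloor}$, the shift $m \mapsto m+4$ increases $\lfloor m/2 \rfloor$ by $2$.
\item For $\operatorname{Re}[(1-\mathrm{i})\,\mathrm{i}^m]$, periodicity holds because $\mathrm{i}^4 = 1$.
\end{itemize}

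It then remains to tabulate the values at $m = 0,1,2,3$.
\begin{itemize}
\item The exponent $m(m+3)/2$ takes the values $0, 2, 5, 9$, giving $+1, +1, -1, -1$.
\item $\lfloor m/2 \rfloor$ takes the values $0, 0, 1, 1$, giving $+1, +1, -1, -1$.
\item $(1-\mathrm{i})\,\mathrm{i}^m$ takes the values $1-\mathrm{i}$, $\mathrm{i}+1$, $-1+\mathrm{i}$, $-\mathrm{i}-1$, whose real parts are $1, 1, -1, -1$.
\end{itemize}
The three sequences agree, which proves all equalities and the stated period-four pattern.

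There is no real obstacle here. The only point requiring care is the periodicity argument for the quadratic exponent, which I handle through the parity of the exponent difference $4m+14$ as above. Alternatively, one can argue directly that $m(m+3)/2 \equiv \lfloor m/2 \rfloor \pmod 2$ by splitting into the cases $m = 2j$ and $m = 2j+1$:
\begin{itemize}
\item For $m = 2j$, the exponent is $j(2j+3) \equiv j \pmod 2$.
\item For $m = 2j+1$, the exponent is $(2j+1)(j+2) \equiv j \pmod 2$.
\end{itemize}
This gives the middle equality without any table.
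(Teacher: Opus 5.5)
Your proof is correct and follows essentially the same route as the paper: reduce to $m \bmod 4$, check the four residues, and handle the real-part form via $\mathrm{i}^4 = 1$. The only cosmetic difference is that the paper checks parity agreement directly for $m = 4t, 4t+1, 4t+2, 4t+3$, whereas you first prove periodicity of the exponent via the difference $4m+14$ and then tabulate $m = 0,1,2,3$.
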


\begin{proof}
The first equality is the definition:
\(m + m(m+1)/2 = m(m+3)/2\). For the second it suffices to check \(m\)
modulo four, since \(m(m+3)/2\) modulo two depends only on \(m\) modulo
four: for \(m = 4t\), \(m(m+3)/2 = 2t(4t+3)\) is even, and
\(\lfloor m/2 \rfloor = 2t\) is even; for \(m = 4t+1\),
\(m(m+3)/2 = (4t+1)(2t+2)\) is even, and \(\lfloor m/2 \rfloor = 2t\) is
even; for \(m = 4t+2\), \(m(m+3)/2 = (2t+1)(4t+5)\) is odd, and
\(\lfloor m/2 \rfloor = 2t+1\) is odd; for \(m = 4t+3\),
\(m(m+3)/2 = (4t+3)(2t+3)\) is odd, and \(\lfloor m/2 \rfloor = 2t+1\)
is odd. In all four cases the parities agree. For the third equality one
computes \((1-\mathrm{i})\mathrm{i}^m\) for \(m = 0,1,2,3\):
\(1-\mathrm{i}\), \(1+\mathrm{i}\), \(-1+\mathrm{i}\),
\(-1-\mathrm{i}\), with real parts \(+1, +1, -1, -1\); since
\(\mathrm{i}^m\) has period four, the claim follows for all \(m\).

\end{proof}

\subsection{The Main Theorem}\label{the-main-theorem}

\renewcommand{\thetheorem}{5.2}
\begin{theorem}[Main Theorem, Loop Formula]\label{theorem:5-2}
 Let
\(B_1, \ldots, B_k\) be an arbitrary anticommuting unit family of type
\(-1\) in a real Clifford algebra, let
\(\theta_1, \ldots, \theta_k \in \mathbb{R}\), and let
\[\Theta_k = (U_1 \cdots U_k)(\widetilde U_1 \cdots \widetilde U_k), \qquad U_i = \cos\frac{\theta_i}{2} + B_i \sin\frac{\theta_i}{2}.\]
Then
\[\boxed{\;\langle \Theta_k \rangle_0 \;=\; \sum_{m=0}^{k} \eta(m) \sum_{\substack{S \subseteq \{1,\ldots,k\} \\ |S| = m}} \; \prod_{i \in S} \sin^2\frac{\theta_i}{2} \; \prod_{i \notin S} \cos^2\frac{\theta_i}{2}\;}\]
with \(\eta(m) = (-1)^{\lfloor m/2 \rfloor}\) by Lemma 5.1. No star
condition is assumed.

\end{theorem}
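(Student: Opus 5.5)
The plan is a direct double subset expansion followed by a sorting of the resulting terms by the symmetric difference $S \triangle T$. First, by Lemma 4.1,
\[
\Theta_k = P_k Q_k = \sum_{S,T \subseteq \{1,\ldots,k\}} (-1)^{|T|}\, a_S a_T\, B_S B_T ,
\]
and by linearity of the scalar projection
\[
\langle \Theta_k \rangle_0 = \sum_{S,T} (-1)^{|T|} a_S a_T \langle B_S B_T \rangle_0 .
\]
I split this double sum into the diagonal part $S = T$ and the off-diagonal part $S \neq T$. The off-diagonal part I further split according to whether $D := S \triangle T$ (nonempty) has even or odd cardinality. By Lemma 3.3, $\langle B_S B_T\rangle_0 = \sigma(S,T)\langle B_D\rangle_0$.

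For the diagonal, Lemma 3.2 gives $B_S^2 = (-1)^{m(m+1)/2}$ with $m = |S|$. The diagonal term is therefore
\[
(-1)^m (-1)^{m(m+1)/2} a_S^2 = \eta(m)\, a_S^2
\]
by Definition D5.1. Moreover $a_S^2 = \prod_{i\in S}\sin^2\frac{\theta_i}{2}\prod_{i\notin S}\cos^2\frac{\theta_i}{2}$, so grouping the diagonal terms by $m$ yields exactly the boxed right-hand side. It then remains to show that the off-diagonal contribution is zero.

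Off-diagonal terms with $|D|$ even and nonzero vanish at once by the Parity Lemma 3.4, since $\langle B_D\rangle_0 = 0$. This uses only anticommutation and unit squares, not the star condition.

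The step I expect to be the real obstacle is the odd case. There $\langle B_D\rangle_0$ need not vanish; the triangle family of Remark 4.4 gives $\langle B_1B_2B_3\rangle_0 = -1$. So the cancellation cannot come from the individual terms, and I would instead pair the term $(S,T)$ with $(T,S)$.
\begin{itemize}
\item By the symmetry Lemma 2.1, $\langle B_S B_T\rangle_0 = \langle B_T B_S\rangle_0$, so the two terms carry the same scalar factor.
\item The coefficients are $(-1)^{|T|}a_Sa_T$ and $(-1)^{|S|}a_Ta_S$.
\item Since $|S| + |T| \equiv |S\triangle T| = |D| \pmod 2$ is odd, $(-1)^{|S|} = -(-1)^{|T|}$.
\end{itemize}
Hence each such pair cancels exactly. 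The pairing $(S,T)\leftrightarrow(T,S)$ is a fixed-point-free involution on the off-diagonal index set, so the whole odd group sums to zero.

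This finishes the argument without any star condition. It also explains the contrast with Theorem 4.2: there the odd-length scalar parts enter with no partner to cancel against.
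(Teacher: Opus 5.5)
Your proposal is correct and follows the paper's proof essentially step for step. The diagonal terms are evaluated with Lemma 3.2, the even off-diagonal terms vanish by the Parity Lemma 3.4, and the odd off-diagonal terms cancel when $(S,T)$ is paired with $(T,S)$, using Lemma 2.1 and $|S|+|T| \equiv |S \triangle T| \pmod 2$.
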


\begin{proof}
By Lemma 4.1,
\[\Theta_k = \Big(\sum_{S} a_S B_S\Big)\Big(\sum_{T} (-1)^{|T|} a_T B_T\Big) = \sum_{S} \sum_{T} (-1)^{|T|} a_S a_T \, B_S B_T,\]
and with the linearity of the scalar projection
\[\langle \Theta_k \rangle_0 = \sum_{S}\sum_{T} (-1)^{|T|} a_S a_T \, \langle B_S B_T \rangle_0. \tag{5.1}\]
We decompose the double sum into the diagonal part \(S = T\) and the
off-diagonal part \(S \neq T\).

\textbf{Diagonal part.} For \(S = T\) with \(|S| = m\), Lemma 3.2 gives
\[\langle B_S B_S \rangle_0 = \langle B_S^2 \rangle_0 = (-1)^{m(m+1)/2},\]
and the corresponding contribution is
\[(-1)^{m} a_S^2 (-1)^{m(m+1)/2} = \eta(m) \, a_S^2 = \eta(m) \prod_{i \in S} s_i^2 \prod_{i \notin S} c_i^2,\]
by Definition D5.1. Summed over all \(S\), ordered by cardinality, this
yields exactly the right-hand side of the claimed formula.

\textbf{Off-diagonal part.} Now let \(S \neq T\) and
\(D := S \triangle T \neq \emptyset\). By Lemma 3.3,
\(\langle B_S B_T \rangle_0 = \sigma(S,T) \langle B_D \rangle_0\). We
distinguish by the parity of \(|D|\).

\emph{Case 1: \(|D|\) even.} Then \(\langle B_D \rangle_0 = 0\) by Lemma
3.4, so the contribution of the pair \((S,T)\) vanishes individually.

\emph{Case 2: \(|D|\) odd.} Here the contribution does not vanish
individually in general, but it does pairwise. We group the ordered
pairs \((S,T)\) and \((T,S)\); since \(S \neq T\), these are two
distinct summands of (5.1). By Lemma 2.1,
\(\langle B_S B_T \rangle_0 = \langle B_T B_S \rangle_0\), so their sum
writes as
\[a_S a_T \langle B_S B_T \rangle_0 \Big[(-1)^{|T|} + (-1)^{|S|}\Big].\]
Now \(|S| + |T| = |S \triangle T| + 2|S \cap T| \equiv |D| \pmod 2\)
holds in general. Since \(|D|\) is odd, \(|S|\) and \(|T|\) have
different parity, hence \((-1)^{|S|} + (-1)^{|T|} = 0\) and the square
bracket vanishes.

In both cases the off-diagonal part contributes nothing, and the
diagonal part remains. \(\blacksquare\)

\emph{In words:} When expanding the closed word, \(2^k \times 2^k\)
terms arise. Only the \(2^k\) diagonal terms survive. Of the rest, some
vanish because a product of even length has no scalar part, and others
cancel in pairs. It is exactly this second group that is the reason why
Theorem 5.2 also holds for the triangle family from Remark 4.4: there
\(\langle B_{\{1,2,3\}} \rangle_0 = -1 \neq 0\), but \(|\{1,2,3\}|\) is
odd, so all pairs \((S,T)\) with \(S \triangle T = \{1,2,3\}\) vanish
pairwise.
\end{proof}

\renewcommand{\theremark}{5.3}
\begin{remark}\label{remark:5-3}
 The proof uses at no point that the \(B_i\) are
bivectors. It suffices that they are invertible, anticommute pairwise,
satisfy \(B_i^2 = -1\), and that their reversion is
\(\widetilde B_i = -B_i\). That bivectors achieve this is Definition
D2.2.

\end{remark}

\renewcommand{\theremark}{5.4}
\begin{remark}[Relation to the Prior Work]\label{remark:5-4}
 In the prior work,
Theorem 5.2 appears as Theorem \(S5.11\) \citep{Jenzer2026}, there
formulated for star families and with a different proof. The version
given here is stronger in two respects: it requires no star condition,
and it manages without induction over \(k\).

\end{remark}

\subsection{\texorpdfstring{5.3 First Check: the Case
\(k=1\)}{5.3 First Check: the Case k=1}}\label{first-check-the-case-k1}

For \(k = 1\), \(\Theta_1 = U_1 \widetilde U_1 = 1\), so
\(\langle \Theta_1 \rangle_0 = 1\). The formula gives
\[\eta(0) c_1^2 + \eta(1) s_1^2 = c_1^2 + s_1^2 = 1.\] Both agree. This
is the simplest, but not the least interesting check: it shows that the
sign law begins with \(\eta(1) = +1\) and not, as one might expect from
a naive reversion argument, with \(-1\).

\subsection{Arbitrary Index Orders}\label{arbitrary-index-orders}

\begin{quote}
This section generalizes the main theorem to an arbitrary permutation of
the index order in the second factor. The trivial reverse order from
Chapter 1.1 appears as the limiting case in which all signs are \(+1\).
\end{quote}

\renewcommand{\thedefinition}{D5.2}
\begin{definition}[Permuted Loop]\label{definition:D5-2}
 Let \(\tau\) be a permutation
of \(\{1, \ldots, k\}\). We set
\[Q_k^{(\tau)} = \widetilde U_{\tau(1)} \, \widetilde U_{\tau(2)} \cdots \widetilde U_{\tau(k)}, \qquad \Theta_k(\tau) = P_k \, Q_k^{(\tau)},\]
so that \(\Theta_k(\mathrm{id}) = \Theta_k\) and
\(\Theta_k(\tau_{\mathrm{rev}})\) for the reverse permutation
\(\tau_{\mathrm{rev}}(j) = k+1-j\) gives the trivial product
\(P_k \widetilde{P_k} = 1\) from Chapter 1.1. For
\(S \subseteq \{1, \ldots, k\}\) let
\[\iota(\tau, S) := \#\{(i, j) \in S \times S : i < j,\ \tau^{-1}(i) > \tau^{-1}(j)\}\]
be the \emph{inversion count} of \(\tau\) on \(S\).

\end{definition}

\renewcommand{\thetheorem}{5.5}
\begin{theorem}[Permuted Loop Formula]\label{theorem:5-5}
 Let \(B_1, \ldots, B_k\)
be an anticommuting unit family of type \(-1\) that satisfies the star
condition, and let \(\tau\) be a permutation of \(\{1, \ldots, k\}\).
Then
\[\boxed{\;\langle \Theta_k(\tau) \rangle_0 = \sum_{S \subseteq \{1,\ldots,k\}} \eta(|S|) \, (-1)^{\iota(\tau, S)} \prod_{i \in S} w_i \prod_{i \notin S} (1 - w_i).\;}\]

\end{theorem}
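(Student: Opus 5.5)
The plan is to repeat the subset expansion from the proof of Theorem 5.2, with two changes. The second factor now carries a permutation-dependent reordering sign. The off-diagonal terms are then killed directly by the star condition rather than by the pairing argument.

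\textbf{Step 1: expand the permuted factor.} Expanding $\prod_{j=1}^k (c_{\tau(j)} - s_{\tau(j)} B_{\tau(j)})$ as in Lemma 4.1 gives
\[
Q_k^{(\tau)} = \sum_T (-1)^{|T|} a_T \, B_T^{(\tau)},
\]
where $B_T^{(\tau)}$ is the product of the $B_i$, $i \in T$, in the order in which they occur in $\tau(1), \ldots, \tau(k)$. That is, $B_i$ stands before $B_j$ exactly when $\tau^{-1}(i) < \tau^{-1}(j)$.

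I then claim $B_T^{(\tau)} = (-1)^{\iota(\tau,T)} B_T$. To see this, sort the word into ascending index order by adjacent transpositions, as in bubble sort. Each transposition swaps two distinct anticommuting factors and so costs exactly one sign. The minimal number of such swaps is the number of pairs $i<j$ in $T$ that appear in the wrong order, which is $\iota(\tau,T)$ by Definition D5.2. This sign bookkeeping is the only genuinely new ingredient. I expect it to be the main point to state carefully, in particular that the inversion count restricted to $T$ is the right count.

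\textbf{Step 2: kill the off-diagonal terms.} Substituting the expansion gives
\[
\langle \Theta_k(\tau) \rangle_0 = \sum_{S,T} (-1)^{|T|+\iota(\tau,T)} a_S a_T \langle B_S B_T \rangle_0 .
\]
For $S \neq T$, Lemma 3.3 gives $\langle B_S B_T \rangle_0 = \sigma(S,T)\langle B_{S\triangle T}\rangle_0$, and this vanishes by the star condition since $S \triangle T \neq \emptyset$.

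Here the star condition is genuinely needed. For odd $|S\triangle T|$, the pairing of $(S,T)$ with $(T,S)$ used in Theorem 5.2 no longer produces the bracket $(-1)^{|S|}+(-1)^{|T|}$. The bracket becomes $(-1)^{|T|+\iota(\tau,T)} + (-1)^{|S|+\iota(\tau,S)}$, which need not vanish because the two inversion counts can differ. Even lengths could still be handled by Lemma 3.4, but the star condition disposes of both parities at once.

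\textbf{Step 3: evaluate the diagonal.} For $S=T$ with $|S|=m$, Lemma 3.2 gives $B_S^2 = (-1)^{m(m+1)/2}$. The contribution is therefore
\[
(-1)^m (-1)^{m(m+1)/2} (-1)^{\iota(\tau,S)} a_S^2 = \eta(|S|)\,(-1)^{\iota(\tau,S)} \prod_{i\in S} w_i \prod_{i\notin S}(1-w_i),
\]
using Definition D5.1 and $c_i^2 = 1-w_i$. Summing over $S$ gives the boxed formula.

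As a sanity check, take $\tau = \tau_{\mathrm{rev}}$. Then $\iota(\tau,S) = m(m-1)/2$, and
\[
\eta(m)(-1)^{m(m-1)/2} = (-1)^{m + m(m+1)/2 + m(m-1)/2} = (-1)^{m+m^2} = 1 .
\]
The sum therefore collapses to $\prod_i (w_i + 1 - w_i) = 1$, consistent with $P_k\widetilde{P_k}=1$. For $\tau = \mathrm{id}$, all $\iota$ vanish and we recover Theorem 5.2.
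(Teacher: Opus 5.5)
Your proposal is correct and follows essentially the same route as the paper. It uses the $\tau$-ordered subset expansion with reordering sign $(-1)^{\iota(\tau,T)}$, kills every off-diagonal term individually, and evaluates the diagonal via Lemma 3.2 and Definition D5.1; the one difference is that the paper disposes of even $|S \triangle T|$ by Lemma 3.4 and uses the star condition only for odd $|S \triangle T|$, which shows the hypothesis is needed only on odd subsets, as you also note in passing.
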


\begin{proof}
The same subset expansion as in Lemma 4.1, applied to the
product ordered by \(\tau\), gives
\[Q_k^{(\tau)} = \sum_{T} (-1)^{|T|} a_T \, B_{T, \tau},\] where
\(B_{T, \tau}\) denotes the product of the \(B_i\) with \(i \in T\) in
the order given by \(\tau\). Reordering into ascending order costs
exactly the inversion count:
\(B_{T, \tau} = (-1)^{\iota(\tau, T)} B_T\). Thus
\[\langle \Theta_k(\tau) \rangle_0 = \sum_{S} \sum_{T} (-1)^{|T| + \iota(\tau, T)} \, a_S a_T \, \langle B_S B_T \rangle_0.\]
For \(S \neq T\) and \(D = S \triangle T \neq \emptyset\), every term
vanishes individually: for even \(|D|\) by Lemma 3.4, for odd \(|D|\) by
the star condition. (Thus the weaker requirement
\(\langle B_D \rangle_0 = 0\) for all \(D\) of odd cardinality
suffices.) The diagonal part \(S = T\) with \(|S| = m\) remains, and its
contribution is
\[(-1)^{m + \iota(\tau, S)} \, a_S^2 \, \langle B_S^2 \rangle_0 = \eta(m) \, (-1)^{\iota(\tau, S)} \, a_S^2\]
by Lemma 3.2 and Definition D5.1. \(\blacksquare\)

\emph{In words:} The permutation acts not globally, but subset by
subset: each inversion within a subset flips its sign. What is decisive
is thus the inversion parity on each subset, not the cycle structure of
the permutation.
\end{proof}

\renewcommand{\thecorollary}{5.6}
\begin{corollary}[The Reverse Order as a Limiting Case]\label{corollary:5-6}
 For the
reverse permutation \(\tau_{\mathrm{rev}}\),
\(\iota(\tau_{\mathrm{rev}}, S) = \binom{|S|}{2}\), and since
\[\eta(m) \, (-1)^{m(m-1)/2} = (-1)^{m(m+3)/2 \, + \, m(m-1)/2} = (-1)^{m^2 + m} = 1 \qquad \text{for all } m\]
Theorem 5.5 gives
\[\langle \Theta_k(\tau_{\mathrm{rev}}) \rangle_0 = \sum_{S} \prod_{i \in S} w_i \prod_{i \notin S} (1 - w_i) = \prod_{i=1}^{k} \big(w_i + (1 - w_i)\big) = 1.\]
The permuted loop formula thus reproduces the scalar part of the trivial
identity \(P_k \widetilde{P_k} = 1\) as the limiting case in which all
signs are \(+1\). The element-wise identity is of course stronger and
remains the one-line computation from Chapter 1.1.

\end{corollary}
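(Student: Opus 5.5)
The statement to be proved is Corollary 5.6. I would take Theorem 5.5 as given and specialise it to \(\tau_{\mathrm{rev}}\). That needs three steps. First, compute the inversion count \(\iota(\tau_{\mathrm{rev}},S)\). Second, show that the combined sign \(\eta(|S|)(-1)^{\iota}\) is identically \(+1\). Third, collapse the remaining sum with the binomial product identity.

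\textbf{Step 1: inversion count.} Since \(\tau_{\mathrm{rev}}\) is an involution, \(\tau_{\mathrm{rev}}^{-1}(i)=k+1-i\). Hence for \(i<j\) in \(S\) we have \(\tau_{\mathrm{rev}}^{-1}(i)=k+1-i>k+1-j=\tau_{\mathrm{rev}}^{-1}(j)\). So every pair \(i<j\) in \(S\) is an inversion, and \(\iota(\tau_{\mathrm{rev}},S)=\binom{|S|}{2}=m(m-1)/2\) with \(m=|S|\).

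\textbf{Step 2: the sign.} Using the representation \(\eta(m)=(-1)^{m(m+3)/2}\) from Lemma 5.1, the combined exponent is \(m(m+3)/2+m(m-1)/2=m^2+m=m(m+1)\). This is always even, so every sign in Theorem 5.5 equals \(+1\).

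\textbf{Step 3: summation.} The formula then reduces to \(\sum_S\prod_{i\in S}w_i\prod_{i\notin S}(1-w_i)\). Expanding the product \(\prod_{i=1}^k\big(w_i+(1-w_i)\big)\) yields exactly one term for each choice of subset \(S\), namely the indices where \(w_i\) is taken. This is the same bookkeeping as in Lemma 4.1. The product is \(1\), which gives the claim.

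The only genuine hypothesis issue is that Theorem 5.5 assumes the star condition, so the corollary inherits that assumption. Alternatively, the result holds unconditionally: \(Q_k^{(\tau_{\mathrm{rev}})}=\widetilde{P_k}\) by the anti-automorphism property of reversion in Definition D2.2, and \(U_i\widetilde U_i=1\) from Definition D2.3 gives \(P_k\widetilde{P_k}=1\) elementwise by telescoping. I would mention this as a consistency check.

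The main (mild) obstacle is keeping the inversion convention straight, i.e.\ that the definition uses \(\tau^{-1}\), which is harmless here because \(\tau_{\mathrm{rev}}\) is an involution. The other point to watch is using the \(m(m+3)/2\) form of \(\eta\) rather than the floor form when adding exponents.
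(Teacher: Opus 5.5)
Your proposal is correct and follows the paper's own argument step for step. You specialise Theorem 5.5 to \(\tau_{\mathrm{rev}}\), get \(\iota(\tau_{\mathrm{rev}},S)=\binom{|S|}{2}\) because every pair in \(S\) is inverted, note that the combined exponent \(m(m+3)/2+m(m-1)/2=m(m+1)\) is even, and collapse the subset sum to \(\prod_{i=1}^k\big(w_i+(1-w_i)\big)=1\). Your remark that the star hypothesis is inherited from Theorem 5.5, while the conclusion holds unconditionally via the elementwise identity \(P_k\widetilde{P_k}=1\), matches the paper's closing comment.
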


\renewcommand{\theremark}{5.7}
\begin{remark}[On the Scope, with Numerical Evidence]\label{remark:5-7}
 The
hypotheses of Theorem 5.5 can be sharpened at two points. First, for
\(\tau = \mathrm{id}\) no star condition is needed: that is Theorem 5.2,
whose pairing argument cancels the odd off-diagonal terms in pairs.
Second, this pairing argument fails termwise for
\(\tau \neq \mathrm{id}\). Nevertheless, numerical evidence suggests
that the formula also holds without the star condition in that case:
tested on the triangle family from Remark 4.4, which violates the star
condition, under all six permutations of \(\{1, 2, 3\}\) with random
parameter sets; the largest deviation encountered was in the range of
double machine precision. A general proof would have to capture the
cancellation of the odd off-diagonal terms in coarser groups than pairs;
we have not determined such a grouping.

\begin{center}\rule{0.5\linewidth}{0.5pt}\end{center}

\pagebreak

\end{remark}

\section{Equivalent Forms, Bounds and
Checks}\label{equivalent-forms-bounds-and-checks}

\begin{quote}
This chapter reshapes the main theorem. The compact form brings it onto
a single line, the polar form makes the zeros visible, and the check
cases allow a verification by hand.
\end{quote}

\subsection{The Subset Formula in
Weights}\label{the-subset-formula-in-weights}

With \(w_i = \sin^2\frac{\theta_i}{2}\), hence
\(\cos^2\frac{\theta_i}{2} = 1 - w_i\), Theorem 5.2 reads
\[\langle \Theta_k \rangle_0 = \sum_{S \subseteq \{1,\ldots,k\}} \eta(|S|) \prod_{i \in S} w_i \prod_{i \notin S} (1 - w_i). \tag{6.1}\]
Without the sign law, the right-hand side would be a sum over a
probability distribution and identically \(1\). The sign law is thus
exactly the deviation that separates \(\langle \Theta_k \rangle_0\) from
the trivial quantity \(1\). We call (6.1) the \emph{subset formula}.

\subsection{The Compact Form}\label{the-compact-form}

\renewcommand{\thecorollary}{6.1}
\begin{corollary}[Compact Form]\label{corollary:6-1}
 For all \(k \geq 1\) and all
\(\theta_i \in \mathbb{R}\),
\[\boxed{\;\langle \Theta_k \rangle_0 = \operatorname{Re}\!\left[(1 - \mathrm{i}) \prod_{i=1}^{k} \left(\cos^2\frac{\theta_i}{2} + \mathrm{i} \, \sin^2\frac{\theta_i}{2}\right)\right] = \operatorname{Re}\!\left[(1-\mathrm{i}) \prod_{i=1}^{k} \big((1 - w_i) + \mathrm{i} \, w_i\big)\right].\;}\]

\end{corollary}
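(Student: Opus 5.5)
The plan is to start from the subset formula (6.1), which is Theorem 5.2 rewritten with \(w_i = \sin^2\frac{\theta_i}{2}\) and \(1-w_i = \cos^2\frac{\theta_i}{2}\), and to recognise its right-hand side as the real part of a single complex product. The only ingredient I need besides Theorem 5.2 is the third representation of the sign law from Lemma 5.1:
\[\eta(m) = \operatorname{Re}\big[(1-\mathrm{i})\,\mathrm{i}^m\big].\]

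First, I substitute this expression for \(\eta(|S|)\) into (6.1). The weights \(\prod_{i\in S} w_i \prod_{i\notin S}(1-w_i)\) are real, and \(\operatorname{Re}\) is \(\mathbb{R}\)-linear. I can therefore pull the real-part projection and the common factor \((1-\mathrm{i})\) out of the finite sum. This gives
\[\langle\Theta_k\rangle_0 = \operatorname{Re}\Big[(1-\mathrm{i})\sum_{S\subseteq\{1,\ldots,k\}} \mathrm{i}^{|S|}\prod_{i\in S} w_i \prod_{i\notin S}(1-w_i)\Big].\]

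Second, I write \(\mathrm{i}^{|S|} = \prod_{i\in S}\mathrm{i}\). The summand then becomes \(\prod_{i\in S}(\mathrm{i}\,w_i)\prod_{i\notin S}(1-w_i)\). The sum over all subsets is exactly the distributive expansion of \(\prod_{i=1}^k\big((1-w_i)+\mathrm{i}\,w_i\big)\) in the commutative ring \(\mathbb{C}\). This is the same bookkeeping as in Lemma 4.1, but now no ordering issues arise. Substituting back \(w_i = \sin^2\frac{\theta_i}{2}\) and \(1-w_i = \cos^2\frac{\theta_i}{2}\) yields both boxed expressions.

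There is no real obstacle; the argument is a short rewriting. The one point to handle with care is justifying that \(\operatorname{Re}\) may be moved outside the sum. This requires the weights to be real, which holds because all \(\theta_i\in\mathbb{R}\), together with \(\mathbb{R}\)-linearity of \(\operatorname{Re}\). Once that is noted, the identity \(\sum_S \mathrm{i}^{|S|}\prod_{i\in S} w_i\prod_{i\notin S}(1-w_i) = \prod_i(1-w_i+\mathrm{i}w_i)\) is immediate. As a sanity check, for \(k=1\) the formula gives \(\operatorname{Re}[(1-\mathrm{i})(c_1^2+\mathrm{i}s_1^2)] = c_1^2+s_1^2 = 1\), matching Section 5.3.
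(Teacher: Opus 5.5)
Your proposal is correct and follows the paper's own proof essentially verbatim: substitute $\eta(m) = \operatorname{Re}[(1-\mathrm{i})\,\mathrm{i}^m]$ from Lemma 5.1 into the subset formula (6.1), pull $\operatorname{Re}$ and the factor $(1-\mathrm{i})$ outside the sum by real-linearity, and recognise the remaining sum as the expansion of $\prod_{i=1}^{k}\big((1-w_i)+\mathrm{i}\,w_i\big)$. Your explicit remark that the weights are real (needed for the real-linearity step) is a small point the paper leaves implicit.
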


\begin{proof}
By Lemma 5.1,
\(\eta(m) = \operatorname{Re}[(1-\mathrm{i})\mathrm{i}^m]\). Since
\((1-\mathrm{i})\) and the weights \(w_i\) are independent of \(m\) and
\(\operatorname{Re}\) is real-linear, it follows from (6.1) that
\[\langle \Theta_k \rangle_0 = \operatorname{Re}\!\left[(1-\mathrm{i}) \sum_{S} \mathrm{i}^{|S|} \prod_{i \in S} w_i \prod_{i \notin S} (1-w_i)\right] = \operatorname{Re}\!\left[(1-\mathrm{i}) \prod_{i=1}^{k} \big((1-w_i) + \mathrm{i} w_i\big)\right],\]
where the last step is the factorization of the subset sum, i.e., the
reverse of expanding. \(\blacksquare\)

\emph{In short:} The entire statement resides in a product of complex
numbers and a real-part projection at the end. The individual factors
each depend on only one parameter; the real-part projection prevents the
result from decomposing into a product of real factors.
\end{proof}

\renewcommand{\thecorollary}{6.2}
\begin{corollary}[Non-Separability]\label{corollary:6-2}
 For \(k \geq 2\) there exists
no function \(g : [0,1] \to \mathbb{R}\) with
\[\langle \Theta_k \rangle_0 = \prod_{i=1}^{k} g(w_i) \qquad \text{for all } w_1, \ldots, w_k \in [0,1].\]

\end{corollary}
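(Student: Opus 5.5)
The plan is to test the assumed factorisation on a few well-chosen points of $[0,1]^k$, using the compact form of Corollary 6.1. Suppose for contradiction that some $g:[0,1]\to\mathbb{R}$ satisfies $\langle \Theta_k\rangle_0=\prod_{i=1}^k g(w_i)$ for all weights. Write
\[
F(w_1,\ldots,w_k):=\operatorname{Re}\!\Big[(1-\mathrm{i})\prod_{i=1}^k\big((1-w_i)+\mathrm{i}\,w_i\big)\Big].
\]

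\textbf{Step 1 (one free weight).} Put $w_2=\cdots=w_k=0$. Then every factor with $i\ge 2$ equals $1$, and
\[
F(w,0,\ldots,0)=\operatorname{Re}\big[(1-\mathrm{i})\big((1-w)+\mathrm{i} w\big)\big]=(1-w)+w=1 .
\]
This is consistent with $\Theta_1=1$ from Section 5.3. Hence $g(w)\,g(0)^{k-1}=1$ for every $w\in[0,1]$.

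\textbf{Step 2 ($g$ is constant).} Taking $w=0$ gives $g(0)^k=1$, so $g(0)\neq 0$ and $g(0)\in\{\pm1\}$. Step 1 then gives $g(w)=g(0)^{-(k-1)}$ for every $w$, so $g$ is constant. Its constant value is $g(0)$. Therefore $\prod_i g(w_i)=g(0)^k=1$ identically on $[0,1]^k$.

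\textbf{Step 3 (a point where $F\neq 1$).} Here $k\ge 2$ is used. Put $w_1=w_2=1$ and $w_3=\cdots=w_k=0$. The product is then $\mathrm{i}\cdot\mathrm{i}=-1$, so
\[
F=\operatorname{Re}\big[-(1-\mathrm{i})\big]=-1\neq 1 .
\]
Equivalently, in (6.1) only $S=\{1,2\}$ contributes, with sign $\eta(2)=-1$. This contradicts Step 2, so no such $g$ exists.

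No step is a real obstacle. The only point needing care is Step 2: one must first get $g(0)\neq 0$ from $g(0)^k=1$ before dividing, which rules out degenerate choices of $g$. Everything else is direct evaluation of the compact form, so the argument needs no continuity or other regularity of $g$.
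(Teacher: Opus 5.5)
Your proof is correct and takes essentially the same route as the paper. Both set all but one weight to zero to get $g(w)\,g(0)^{k-1}=1$, hence $g$ constant, and then contradict this at $w_1=w_2=1$, $w_3=\cdots=w_k=0$, where $\langle\Theta_k\rangle_0=-1$ rather than $+1$; the only cosmetic difference is that you evaluate via the compact form of Corollary 6.1, whereas the paper first reduces (6.1) to $1-2w_1w_2$.
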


\begin{proof}
Setting \(w_3 = \cdots = w_k = 0\) in (6.1), only subsets
\(S \subseteq \{1,2\}\) contribute, and with \(\eta(0) = \eta(1) = +1\),
\(\eta(2) = -1\) one obtains
\[1 \cdot (1-w_1)(1-w_2) + 1 \cdot \big[w_1(1-w_2) + (1-w_1)w_2\big] - 1 \cdot w_1 w_2 = 1 - 2 w_1 w_2.\]
If the quantity were separable, then
\(1 - 2 w_1 w_2 = g(w_1) g(w_2) g(0)^{k-2}\) would hold for all
\(w_1, w_2\). Setting \(w_2 = 0\) gives \(1 = g(w_1) g(0)^{k-1}\) for
all \(w_1\), so \(g\) is constant on \([0,1]\) and so is the right-hand
side. But the left-hand side equals \(-1\) for \(w_1 = w_2 = 1\) and
\(+1\) for \(w_1 = w_2 = 0\), hence is not constant. Contradiction.
\(\blacksquare\)

Corollary 6.2 is the precise formulation of the observation that
motivated the prior work: the parameters of the individual family
members couple, and inevitably so as soon as more than one is present.
For the chain this does not hold, by Theorem 4.2.
\end{proof}

\subsection{The Polar Form}\label{the-polar-form}

\renewcommand{\thecorollary}{6.3}
\begin{corollary}[Polar Form]\label{corollary:6-3}
 Set for \(i = 1, \ldots, k\)
\[\rho_i := \sqrt{\cos^4\frac{\theta_i}{2} + \sin^4\frac{\theta_i}{2}}\,, \qquad \alpha_i := \arctan\!\left(\tan^2\frac{\theta_i}{2}\right) \in \left[0, \tfrac{\pi}{2}\right).\]
Then
\[\boxed{\;\langle \Theta_k \rangle_0 = \sqrt{2} \left(\prod_{i=1}^{k} \rho_i\right) \cos\!\left(\sum_{i=1}^{k} \alpha_i - \frac{\pi}{4}\right).\;}\]
Furthermore, the identity
\[\rho_i^2 = 1 - 2 w_i (1 - w_i) = 1 - \tfrac{1}{2}\sin^2\theta_i, \qquad \text{in particular} \quad \tfrac{1}{\sqrt 2} \le \rho_i \le 1.\]

\end{corollary}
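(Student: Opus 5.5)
The plan is to start from the compact form of Corollary 6.1 and write each complex factor in polar coordinates. Set $z_i := \cos^2\frac{\theta_i}{2} + \mathrm{i}\sin^2\frac{\theta_i}{2}$. Its real part $1-w_i$ and imaginary part $w_i$ are both nonnegative and not both zero, so $z_i$ lies in the closed first quadrant. Hence $z_i = \rho_i e^{\mathrm{i}\alpha_i}$ with modulus
\[
|z_i| = \sqrt{\cos^4\tfrac{\theta_i}{2}+\sin^4\tfrac{\theta_i}{2}} = \rho_i
\]
and argument in $[0,\pi/2]$. Where $\cos\frac{\theta_i}{2}\neq 0$ this argument is $\arctan(w_i/(1-w_i)) = \arctan(\tan^2\frac{\theta_i}{2}) = \alpha_i$.

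The next step is to write $1-\mathrm{i} = \sqrt2\,e^{-\mathrm{i}\pi/4}$. Then
\[
(1-\mathrm{i})\prod_i z_i = \sqrt2\Big(\prod_i\rho_i\Big)\,e^{\mathrm{i}(\sum_i\alpha_i - \pi/4)},
\]
and taking the real part gives the boxed formula at once.

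For the supplementary identity, expand
\[
\rho_i^2 = (1-w_i)^2 + w_i^2 = 1 - 2w_i(1-w_i).
\]
Then use $w_i(1-w_i) = \sin^2\frac{\theta_i}{2}\cos^2\frac{\theta_i}{2} = \frac14\sin^2\theta_i$, which gives $\rho_i^2 = 1 - \frac12\sin^2\theta_i$. Since $\sin^2\theta_i\in[0,1]$, we get $\rho_i^2\in[\frac12,1]$, and hence $\frac1{\sqrt2}\le\rho_i\le1$.

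The main obstacle is the degenerate case $\cos\frac{\theta_i}{2}=0$. There $\tan^2\frac{\theta_i}{2}$ is undefined, and the true argument of $z_i = \mathrm{i}$ is $\pi/2$, which lies outside the stated range $[0,\pi/2)$. I would handle it by reading $\alpha_i$ as the continuous extension $\arctan(+\infty) = \pi/2$, equivalently by defining $\alpha_i := \arg z_i \in [0,\pi/2]$. The formula then holds for all $\theta_i$, by this convention or by continuity of both sides in $\theta_i$. I would note this convention explicitly in the proof.
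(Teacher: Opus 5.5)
Your proposal is correct and follows the paper's proof essentially step for step: you write each factor $z_i$ of the compact form in polar coordinates, use $1-\mathrm{i}=\sqrt2\,e^{-\mathrm{i}\pi/4}$, take the real part, and expand $\rho_i^2 = 1-2w_i(1-w_i)$ with $w_i(1-w_i)=\frac14\sin^2\theta_i$. Your explicit handling of the case $\cos\frac{\theta_i}{2}=0$ is slightly more careful than the paper's remark that $\alpha_i\to\pi/2$ ``in the limit'': there $z_i=\mathrm{i}$, so $\alpha_i=\pi/2$ must be admitted, and the stated range should really be $[0,\pi/2]$.
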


\begin{proof}
The \(i\)-th factor of the compact form is
\(z_i = (1-w_i) + \mathrm{i} w_i = \cos^2\frac{\theta_i}{2} + \mathrm{i}\sin^2\frac{\theta_i}{2}\).
Both real and imaginary parts are non-negative, so \(z_i\) lies in the
first quadrant, and \(z_i \neq 0\) since real and imaginary parts do not
vanish simultaneously. Hence \(z_i = \rho_i e^{\mathrm{i}\alpha_i}\)
with
\(\rho_i = |z_i| = \sqrt{\cos^4\frac{\theta_i}{2} + \sin^4\frac{\theta_i}{2}}\)
and \(\alpha_i = \arg z_i \in [0, \frac{\pi}{2})\), where
\(\tan \alpha_i = \sin^2\frac{\theta_i}{2} / \cos^2\frac{\theta_i}{2} = \tan^2\frac{\theta_i}{2}\)
for \(\cos\frac{\theta_i}{2} \neq 0\) and \(\alpha_i \to \frac{\pi}{2}\)
in the limit. Further,
\(1 - \mathrm{i} = \sqrt 2\, e^{-\mathrm{i}\pi/4}\). Thus
\[(1-\mathrm{i})\prod_i z_i = \sqrt 2 \left(\prod_i \rho_i\right) \exp\!\left[\mathrm{i}\left(\sum_i \alpha_i - \frac{\pi}{4}\right)\right],\]
and the real part is the claimed cosine form. For the identity: with
\(c^2 + s^2 = 1\),
\[\rho^2 = c^4 + s^4 = (c^2+s^2)^2 - 2 c^2 s^2 = 1 - 2 w (1-w),\] and
since \(2cs = \sin\theta\),
\(2c^2s^2 = \frac{1}{2}(2cs)^2 = \frac{1}{2}\sin^2\theta\). Since
\(w(1-w) \in [0, \frac14]\), \(\rho^2 \in [\frac12, 1]\) follows.
\(\blacksquare\)

\emph{In words:} The order defect is the product of the individual
magnitudes, multiplied by a single cosine in whose argument the angles
\(\alpha_i\) are \emph{added}. The nonlinearity resides entirely in this
addition under a cosine. The term \(-\frac{\pi}{4}\) is the only global
contribution; it stems from the prefactor \(1-\mathrm{i}\) and is the
actual reason for the period four in the sign law.
\end{proof}

\renewcommand{\thecorollary}{6.4}
\begin{corollary}[Bound]\label{corollary:6-4}
 For all \(k\) and all parameters,
\[\big|\langle \Theta_k \rangle_0\big| \;\le\; \min\!\left(1, \; \sqrt 2 \prod_{i=1}^{k} \rho_i\right), \qquad \text{and} \qquad 2^{(1-k)/2} \le \sqrt 2 \prod_{i=1}^{k}\rho_i \le \sqrt 2 .\]

\end{corollary}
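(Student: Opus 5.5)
The bound has three parts: $|\langle\Theta_k\rangle_0|\le 1$, $|\langle\Theta_k\rangle_0|\le \sqrt2\prod_i\rho_i$, and the two-sided estimate on $\sqrt2\prod_i\rho_i$. I will prove each separately.

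\emph{Bound by $\sqrt 2\prod_i\rho_i$.} I start from the polar form, Corollary 6.3. There $\langle\Theta_k\rangle_0 = \sqrt2\,(\prod_i\rho_i)\cos(\sum_i\alpha_i-\frac{\pi}{4})$. Since $|\cos|\le 1$, this bound follows at once. Equivalently, it is $|\operatorname{Re} z|\le|z|$ applied to the compact form of Corollary 6.1, using $|1-\mathrm{i}|=\sqrt2$ and $|z_i|=\rho_i$.

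\emph{Bound by $1$.} I use the subset formula (6.1). The weights $p_S:=\prod_{i\in S}w_i\prod_{i\notin S}(1-w_i)$ are non-negative because $w_i\in[0,1]$. They sum to $\prod_i(w_i+1-w_i)=1$, so they form a probability distribution on subsets. Since $\eta(|S|)\in\{\pm1\}$,
\[
\big|\langle\Theta_k\rangle_0\big| \le \sum_S |\eta(|S|)|\,p_S = \sum_S p_S = 1 .
\]
Combining the two bounds gives the minimum.

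\emph{Estimate on $\sqrt2\prod_i\rho_i$.} Corollary 6.3 gives $\frac{1}{\sqrt2}\le\rho_i\le1$ for each $i$. Taking the product over the $k$ factors yields
\[
2^{-k/2}\le\prod_i\rho_i\le1 .
\]
Multiplying by $\sqrt2$ gives $2^{(1-k)/2}\le\sqrt2\prod_i\rho_i\le\sqrt2$.

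There is no real obstacle here. The only points to check are the non-negativity of the weights $p_S$, which needs $w_i\in[0,1]$ (true since $w_i=\sin^2\frac{\theta_i}{2}$), and quoting the $\rho_i$ range correctly from Corollary 6.3.
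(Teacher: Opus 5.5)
Your proof is correct. For the estimate by $\sqrt2\prod_i\rho_i$ and its two-sided range you follow the paper exactly: the polar form with $|\cos|\le 1$, plus $\rho_i\in[\frac{1}{\sqrt2},1]$. Your alternative phrasing via $|\operatorname{Re} z|\le |z|$ on the compact form is slightly cleaner, because it avoids the convention for $\alpha_i$ at $w_i=1$.

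The real difference is in the bound by $1$.

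\begin{itemize}
\item The paper argues algebraically. $\Theta_k$ is a rotor, $\Theta_k\widetilde{\Theta_k}=1$, and it lies in the even subalgebra of $\mathrm{Cl}_{0,n}$. There $\langle x\widetilde x\rangle_0=\sum_{A\text{ even}}x_A^2$ is positive definite, so the squared blade coefficients of $\Theta_k$ sum to $1$. In particular $|\langle\Theta_k\rangle_0|\le 1$.
\item You apply the triangle inequality to the subset formula (6.1), using that the weights $p_S$ form a probability distribution.
\end{itemize}

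Your route is purely combinatorial. It inherits the full generality of Theorem 5.2: any anticommuting unit family of type $-1$ in any real Clifford algebra, of any signature.

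The paper's positivity argument, as written, presupposes the negative definite algebra $\mathrm{Cl}_{0,n}$. In indefinite signature $\langle x\widetilde x\rangle_0$ is not positive definite on the even subalgebra. For example, $x=e_1e_2$ with $e_1^2=+1$, $e_2^2=-1$ gives $\langle x\widetilde x\rangle_0=-1$. So for such families the paper would need an extra transfer step, e.g.\ passing to a star family in $\mathrm{Cl}_{0,k+1}$ with the same $w_i$, which Theorem 5.2 justifies.

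In return, the paper's argument does not use the closed formula at all and gives more. Every blade coefficient of $\Theta_k$, not only the scalar part, is bounded by $1$ in absolute value.
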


\begin{proof}
The second bound follows from Corollary 6.3, since
\(|\cos| \le 1\), together with \(\rho_i \in [\frac{1}{\sqrt2}, 1]\).
For the first, the bound \(1\): for every even basis blade \(e_A\) in
\(\mathrm{Cl}_{0,n}\), i.e., \(|A|\) even,
\[\langle e_A \widetilde{e_A} \rangle_0 = (-1)^{|A|(|A|-1)/2} \, e_A^2 = (-1)^{|A|(|A|-1)/2} \, (-1)^{|A|(|A|+1)/2} = (-1)^{|A|^2} = 1,\]
and for \(A \neq B\), \(\langle e_A \widetilde{e_B} \rangle_0 = 0\),
since \(e_A \widetilde{e_B}\) is proportional to \(e_{A \triangle B}\).
On the even subalgebra \(\mathrm{Cl}_{0,n}^{+}\), therefore,
\[x \mapsto \langle x \widetilde x \rangle_0 = \sum_{A \text{ even}} x_A^2\]
is a positive definite quadratic form. On the full algebra this does not
hold --- for \(x = e_1\),
\(\langle x \widetilde x \rangle_0 = e_1^2 = -1\) ---; the signature
dependence of this form is found in \citep{Lounesto2001}. Since \(\Theta_k\),
as a product of rotors, is itself a rotor, \(\Theta_k\) lies in
\(\mathrm{Cl}_{0,n}^{+}\), and
\(\langle \Theta_k \widetilde{\Theta_k} \rangle_0 = 1\), hence
\(\sum_{A \text{ even}} (\Theta_k)_A^2 = 1\) and in particular
\(|\langle \Theta_k \rangle_0| \le 1\). 
\end{proof}

\subsection{Complete Zero Criterion}\label{complete-zero-criterion}

\renewcommand{\thecorollary}{6.5}
\begin{corollary}[Zeros]\label{corollary:6-5}
 We have
\[\langle \Theta_k \rangle_0 = 0 \quad \Longleftrightarrow \quad \sum_{i=1}^{k} \alpha_i \in \frac{3\pi}{4} + \pi \mathbb{Z}.\]
For \(k = 1\) there is no zero; for every \(k \geq 2\) zeros exist.

\end{corollary}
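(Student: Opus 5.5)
The plan is to read the zero set directly off the polar form of Corollary 6.3,
\[\langle \Theta_k \rangle_0 = \sqrt 2 \Big(\prod_{i=1}^k \rho_i\Big) \cos\!\Big(\sum_{i=1}^k \alpha_i - \tfrac{\pi}{4}\Big).\]
By the identity in Corollary 6.3, each $\rho_i \ge \frac{1}{\sqrt 2} > 0$, so the prefactor $\sqrt 2 \prod_i \rho_i$ never vanishes. Hence $\langle \Theta_k \rangle_0 = 0$ exactly when the cosine vanishes, that is, when $\sum_i \alpha_i - \frac{\pi}{4} \in \frac{\pi}{2} + \pi\mathbb{Z}$. This is equivalent to $\sum_i \alpha_i \in \frac{3\pi}{4} + \pi\mathbb{Z}$, which proves the equivalence.

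One point needs care first, and I expect it to be the only real obstacle: the convention for $\alpha_i$ when $\cos\frac{\theta_i}{2} = 0$. There $z_i = \mathrm{i}$, so the correct argument is $\alpha_i = \frac{\pi}{2}$, whereas Corollary 6.3 states the half-open range $[0, \frac{\pi}{2})$ and covers this value only as a limit. I will take $\alpha_i := \arg z_i \in [0, \frac{\pi}{2}]$ throughout, with $z_i = (1-w_i) + \mathrm{i} w_i$. The polar form then holds for all parameters, and the equivalence above is valid without exception. I will also note that $w \mapsto \arg\big((1-w) + \mathrm{i} w\big)$ is continuous and increasing from $0$ at $w=0$ to $\frac{\pi}{2}$ at $w=1$, so each $\alpha_i$ takes every value in $[0, \frac{\pi}{2}]$.

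For $k = 1$, the sum $\alpha_1$ lies in $[0, \frac{\pi}{2}]$. The nearest points of $\frac{3\pi}{4} + \pi\mathbb{Z}$ are $-\frac{\pi}{4}$ and $\frac{3\pi}{4}$, and both lie outside this interval, so there is no zero. This is consistent with $\langle \Theta_1 \rangle_0 = 1$ from Section 5.3.

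For $k \ge 2$, the sum $\sum_i \alpha_i$ ranges continuously over all of $[0, \frac{k\pi}{2}] \supseteq [0, \pi]$, which contains $\frac{3\pi}{4}$. Concretely, I choose $\alpha_1 = \alpha_2 = \frac{3\pi}{8}$ and all other $\alpha_i = 0$, i.e. $w_3 = \cdots = w_k = 0$. As a sanity check, I will exhibit the zero directly from the reduced formula $1 - 2 w_1 w_2$ in the proof of Corollary 6.2: taking $w_1 = w_2 = \frac{1}{\sqrt 2} \in [0,1]$ and $w_3 = \cdots = w_k = 0$ gives $\langle \Theta_k \rangle_0 = 0$. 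This witness avoids the angle conventions altogether.
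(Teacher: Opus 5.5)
Your proposal is correct and follows the paper's proof: you read the zeros off the polar form using $\rho_i>0$, then use the range of $\sum_i\alpha_i$ to exclude $k=1$ and to produce a zero for $k\ge 2$. Your closed convention $\alpha_i=\arg z_i\in[0,\frac{\pi}{2}]$ handles the case $\cos\frac{\theta_i}{2}=0$, which the paper treats only as a limit, and your explicit witness $w_1=w_2=\frac{1}{\sqrt2}$ (the same point as $\alpha_1=\alpha_2=\frac{3\pi}{8}$) replaces the paper's intermediate-value argument.
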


\begin{proof}
By Corollary 6.3 and \(\rho_i > 0\),
\(\langle \Theta_k \rangle_0\) vanishes if and only if
\(\cos(\sum \alpha_i - \frac{\pi}{4}) = 0\), i.e.,
\(\sum \alpha_i - \frac{\pi}{4} \in \frac{\pi}{2} + \pi\mathbb{Z}\),
that is \(\sum \alpha_i \in \frac{3\pi}{4} + \pi\mathbb{Z}\).

Since \(\alpha_i \in [0, \frac{\pi}{2})\), \(\sum_i \alpha_i\) lies in
the interval \([0, \frac{k\pi}{2})\). For \(k=1\),
\(\frac{\pi}{2} < \frac{3\pi}{4}\), so no value of the condition is
reached; this agrees with \(\langle \Theta_1 \rangle_0 = 1\) from
Chapter 5.3. For \(k \geq 2\),
\(\frac{k\pi}{2} \geq \pi > \frac{3\pi}{4}\); since \(\alpha_i\) depends
continuously on \(\theta_i\) and traverses the entire range
\([0, \frac{\pi}{2})\), \(\sum_i \alpha_i\) assumes every value in
\([0, \frac{k\pi}{2})\), in particular \(\frac{3\pi}{4}\).
\(\blacksquare\)

\pagebreak
\end{proof}

\subsection{Check Cases}\label{check-cases}

We write throughout \(w_i = \sin^2\frac{\theta_i}{2}\).

\begin{longtable}{cl}
\toprule
$k$ & $\langle \Theta_k \rangle_0$ \\
\midrule
\endhead
\bottomrule
$1$ & $1$ \\
$2$ & $1 - 2 w_1 w_2$ \\
$3$ & $1 - 2(w_1 w_2 + w_1 w_3 + w_2 w_3) + 4 w_1 w_2 w_3$ \\
$4$ & $1 - 2\!\!\sum\limits_{i<j}\!\! w_i w_j + 4\!\!\sum\limits_{i<j<l}\!\! w_i w_j w_l - 4 w_1 w_2 w_3 w_4$ \\
\end{longtable}

These expressions are obtained from (6.1) by expanding and are
symbolically confirmed in Appendix A. Two checks by hand:

\begin{itemize}
\tightlist
\item
  For \(\theta_i = 0\) for all \(i\), i.e., \(w_i = 0\),
  \(\Theta_k = 1\) and the formula gives \(1\).
\item
  For \(\theta_i = \pi\) for all \(i\), i.e., \(w_i = 1\),
  \(U_i = B_i\), \(\widetilde U_i = -B_i\), so
  \(\Theta_k = (-1)^k B_{\{1..k\}}^2 = (-1)^k (-1)^{k(k+1)/2} = \eta(k)\);
  and in (6.1) exactly the term \(S = \{1,\ldots,k\}\) with coefficient
  \(\eta(k)\) survives. For \(k = 3\) both give \(\eta(3) = -1\), in
  agreement with the table: \(1 - 2\cdot 3 + 4 = -1\).
\end{itemize}

\subsection{The Symmetric Case}\label{the-symmetric-case}

\renewcommand{\thecorollary}{6.6}
\begin{corollary}[Symmetric Case]\label{corollary:6-6}
 If all parameters are equal,
\(\theta_i = \theta\) and \(w := \sin^2\frac{\theta}{2}\), then
\[\langle \Theta_k \rangle_0 = \sum_{m=0}^{k} (-1)^{\lfloor m/2 \rfloor} \binom{k}{m} w^m (1-w)^{k-m} = \operatorname{Re}\!\Big[(1-\mathrm{i})\big(1 + (\mathrm{i}-1)w\big)^{k}\Big] = \sqrt 2 \, \rho^{k} \cos\big(k\alpha - \tfrac{\pi}{4}\big).\]
In particular, for \(\theta = \frac{\pi}{2}\), i.e., \(w = \frac12\),
\(\rho = \frac{1}{\sqrt2}\), \(\alpha = \frac{\pi}{4}\):
\[\langle \Theta_k \rangle_0 = 2^{(1-k)/2} \cos\!\left(\frac{(k-1)\pi}{4}\right),\]
which vanishes for \(k \equiv 3 \pmod 4\), first at \(k = 3\).

\end{corollary}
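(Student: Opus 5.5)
Corollary 6.6 is a specialisation of the subset formula (6.1), the compact form (Corollary 6.1) and the polar form (Corollary 6.3) to equal parameters. We establish the three expressions in turn and then evaluate at $\theta=\pi/2$.

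\textbf{Binomial sum.} Set $w_i = w$ for all $i$ in (6.1). The summand $\eta(|S|)\prod_{i\in S} w_i\prod_{i\notin S}(1-w_i)$ then depends only on $m=|S|$ and equals $\eta(m)\,w^m(1-w)^{k-m}$. There are $\binom{k}{m}$ subsets of size $m$, and $\eta(m)=(-1)^{\lfloor m/2\rfloor}$ by Lemma 5.1. This gives the first expression.

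\textbf{Complex form.} Substitute $w_i=w$ into Corollary 6.1. Every factor becomes $(1-w)+\mathrm{i}w = 1+(\mathrm{i}-1)w$, so the product is $\bigl(1+(\mathrm{i}-1)w\bigr)^k$.

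\textbf{Polar form.} In Corollary 6.3 all $\rho_i$ equal a common $\rho$ and all $\alpha_i$ equal a common $\alpha$. Hence $\prod_i\rho_i=\rho^k$ and $\sum_i\alpha_i=k\alpha$, which gives $\sqrt2\,\rho^k\cos(k\alpha-\pi/4)$. The boundary value $w=1$, where $\alpha_i=\pi/2$ lies outside the stated range $[0,\pi/2)$, needs a remark. There $z=\mathrm{i}$, so $\arg z=\pi/2$, and the polar identity $(1-\mathrm{i})z^k=\sqrt2\,e^{\mathrm{i}(k\pi/2-\pi/4)}$ still holds by direct computation. This is the only point of care.

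\textbf{Special case $\theta=\pi/2$.} Here $w=\sin^2(\pi/4)=\tfrac12$. Then $\rho^2=1-2\cdot\tfrac12\cdot\tfrac12=\tfrac12$ by the identity in Corollary 6.3, and $\tan\alpha=\tan^2(\pi/4)=1$, so $\alpha=\pi/4$. The polar form gives
\[
\sqrt2\cdot 2^{-k/2}\cos\!\left(\tfrac{k\pi}{4}-\tfrac{\pi}{4}\right)=2^{(1-k)/2}\cos\!\left(\tfrac{(k-1)\pi}{4}\right).
\]
This vanishes exactly when $\tfrac{(k-1)\pi}{4}\in\tfrac{\pi}{2}+\pi\mathbb{Z}$, that is $k-1\equiv 2\pmod 4$, equivalently $k\equiv 3\pmod 4$. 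The smallest such $k$ is $3$.

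As a cross-check, the $k=3$ row of the check-case table gives $1-2\cdot3\cdot\tfrac14+4\cdot\tfrac18=0$.
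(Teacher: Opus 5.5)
Your proposal is correct and matches the paper's proof step by step: specialise (6.1), Corollary 6.1 and Corollary 6.3 to equal weights, then substitute $w=\tfrac12$, $\rho=\tfrac{1}{\sqrt2}$, $\alpha=\tfrac{\pi}{4}$. Two of your steps are small additions to what the paper writes: the explicit check of the boundary value $w=1$, where the paper only says $\alpha_i\to\tfrac{\pi}{2}$ in the limit, and the spelled-out derivation of the condition $k\equiv 3\pmod 4$.
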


\begin{proof}
The first equality is (6.1) with equal weights, where
\(\binom{k}{m}\) subsets of cardinality \(m\) exist. The second is
Corollary 6.1 with \(z_i = 1 + (\mathrm{i}-1)w\) for all \(i\), the
third Corollary 6.3. For \(w = \frac12\),
\(z = \frac12 + \frac{\mathrm{i}}{2}\), so \(\rho = \frac{1}{\sqrt2}\)
and \(\alpha = \frac{\pi}{4}\); substituting gives
\(\sqrt2 \cdot 2^{-k/2}\cos(\frac{k\pi}{4} - \frac{\pi}{4})\).

\end{proof}

\renewcommand{\thecorollary}{6.7}
\begin{corollary}[Lacunary Coefficient Rule]\label{corollary:6-7}
 In the symmetric
case, \(\langle \Theta_k \rangle_0\) is a polynomial in \(w\) of degree
at most \(k\), and the coefficient of \(w^j\) is
\[[w^j]\,\langle \Theta_k \rangle_0 = \binom{k}{j} (-1)^{j} \, 2^{(j+1)/2} \cos\!\left(\frac{\pi(j+1)}{4}\right).\]
It vanishes if and only if \(j \equiv 1 \pmod 4\).

\end{corollary}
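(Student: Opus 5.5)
The plan is to read the coefficient off the compact form of Corollary 6.6, $\langle \Theta_k \rangle_0 = \operatorname{Re}\big[(1-\mathrm{i})(1+(\mathrm{i}-1)w)^k\big]$, rather than from the subset sum. Since $w$ is real and $\operatorname{Re}$ is real-linear, expanding by the binomial theorem gives
\[\langle \Theta_k \rangle_0 = \sum_{j=0}^{k} \binom{k}{j} \operatorname{Re}\!\big[(1-\mathrm{i})(\mathrm{i}-1)^j\big]\, w^j .\]
This shows at once that $\langle \Theta_k \rangle_0$ is a polynomial in $w$ of degree at most $k$. It also shows that its $j$-th coefficient is $\binom{k}{j}\operatorname{Re}[(1-\mathrm{i})(\mathrm{i}-1)^j]$.

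Next I evaluate this real part in polar form, as in Corollary 6.3. We have $\mathrm{i}-1 = \sqrt2\, e^{3\pi\mathrm{i}/4}$ and $1-\mathrm{i} = \sqrt2\, e^{-\pi\mathrm{i}/4}$. Hence
\[(1-\mathrm{i})(\mathrm{i}-1)^j = 2^{(j+1)/2} e^{\mathrm{i}\pi(3j-1)/4},\]
whose real part is $2^{(j+1)/2}\cos\frac{\pi(3j-1)}{4}$.

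The only mildly delicate step is matching this to the stated form $(-1)^j\cos\frac{\pi(j+1)}{4}$. I use $(-1)^j\cos x = \cos(x + j\pi)$ to rewrite the target as $\cos\frac{\pi(5j+1)}{4}$. Then I note that
\[\frac{\pi(5j+1)}{4} + \frac{\pi(3j-1)}{4} = 2\pi j.\]
Since cosine is even and $2\pi$-periodic, the two cosines coincide. This proves the coefficient formula.

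For the vanishing criterion, observe that for $0 \le j \le k$ the factors $\binom{k}{j}$, $(-1)^j$ and $2^{(j+1)/2}$ are all nonzero. So the coefficient vanishes exactly when $\cos\frac{\pi(j+1)}{4} = 0$. This happens iff $\frac{j+1}{4} \in \frac12 + \mathbb{Z}$, i.e. $j+1 \equiv 2 \pmod 4$, i.e. $j \equiv 1 \pmod 4$. As a sanity check against the table in Section 6.4, the coefficient of $w$ is always zero, the coefficient of $w^2$ is $-2\binom{k}{2}$, and the coefficient of $w^3$ is $4\binom{k}{3}$; these agree with the table in the symmetric case.
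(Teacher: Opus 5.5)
Your proof is correct and follows the paper's route. Both arguments binomially expand the symmetric compact form of Corollary 6.6, evaluate $\operatorname{Re}[(1-\mathrm{i})(\mathrm{i}-1)^j]$ in polar form, and read the zeros off the cosine. The only difference is cosmetic: the paper writes $(\mathrm{i}-1)^j = (-1)^j(1-\mathrm{i})^j$, so that $(1-\mathrm{i})(\mathrm{i}-1)^j = (-1)^j(1-\mathrm{i})^{j+1}$ gives the stated form directly. That spares your extra step of checking $\cos\frac{\pi(3j-1)}{4} = (-1)^j\cos\frac{\pi(j+1)}{4}$.
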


\begin{proof}
From Corollary 6.6 and the binomial theorem,
\[\langle \Theta_k \rangle_0 = \operatorname{Re}\!\left[(1-\mathrm{i}) \sum_{j=0}^{k} \binom{k}{j} (\mathrm{i}-1)^j w^j\right] = \sum_{j=0}^{k} \binom{k}{j} w^j \operatorname{Re}\!\left[(1-\mathrm{i})(\mathrm{i}-1)^j\right].\]
Now \((\mathrm{i}-1)^j = (-1)^j (1-\mathrm{i})^j\), so
\((1-\mathrm{i})(\mathrm{i}-1)^j = (-1)^j (1-\mathrm{i})^{j+1}\). With
\(1-\mathrm{i} = \sqrt2 e^{-\mathrm{i}\pi/4}\),
\((1-\mathrm{i})^{j+1} = 2^{(j+1)/2} e^{-\mathrm{i}\pi(j+1)/4}\)
follows, and from this the stated form of the coefficient. It vanishes
if and only if \(\cos\frac{\pi(j+1)}{4} = 0\), i.e.,
\(\frac{j+1}{4} \in \frac12 + \mathbb{Z}\), that is
\(j + 1 \equiv 2 \pmod 4\), hence \(j \equiv 1 \pmod 4\).
\(\blacksquare\)

\emph{In short:} In the symmetric case, every fourth term of the
polynomial is missing, beginning with the linear one. In particular,
\(\langle \Theta_k \rangle_0 = 1 + O(w^2)\) for every \(k\): the order
defect sets in at second order, not first. Such lacunary binomial sums
are classical; the associated technique of real-part projection from
roots of unity goes back to Ramus and is presented in modern form in
\citep{Dobson2017}.
\end{proof}

\subsection{The Hyperbolic Case}\label{the-hyperbolic-case}

The method carries over to families with square \(+1\). We note the
result because it shows how far the proof carries, and because it
explains the role of the number \(1-\mathrm{i}\).

\renewcommand{\thetheorem}{6.8}
\begin{theorem}[Hyperbolic Case]\label{theorem:6-8}
 Let \(B_1, \ldots, B_k\) be
bivectors of a real Clifford algebra with
\[B_i^2 = +1, \qquad B_i B_j = -B_j B_i \ (i \neq j),\] e.g.,
\(B_i = C S_i\) with \(C^2 = -1\), \(S_i^2 = +1\) and pairwise
anticommutation. Set for \(\chi_i \in \mathbb{R}\)
\[U_i = \cosh\frac{\chi_i}{2} + B_i \sinh\frac{\chi_i}{2}, \qquad \widetilde U_i = \cosh\frac{\chi_i}{2} - B_i \sinh\frac{\chi_i}{2},\]
and
\(\Theta_k = (U_1 \cdots U_k)(\widetilde U_1 \cdots \widetilde U_k)\).
Then
\[\langle \Theta_k \rangle_0 = \sum_{m=0}^{k} \eta^{+}(m) \sum_{|S| = m} \prod_{i \in S} \sinh^2\frac{\chi_i}{2} \prod_{i \notin S} \cosh^2\frac{\chi_i}{2} = \operatorname{Re}\!\left[(1+\mathrm{i}) \prod_{i=1}^{k}\left(\cosh^2\frac{\chi_i}{2} + \mathrm{i}\sinh^2\frac{\chi_i}{2}\right)\right]\]
with
\[\eta^{+}(m) = (-1)^{m(m+1)/2} = \operatorname{Re}\!\big[(1+\mathrm{i})\,\mathrm{i}^m\big],\]
i.e., the sign sequence \(+, -, -, +, +, -, -, \ldots\)

\end{theorem}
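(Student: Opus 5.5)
I will mirror the proof of Theorem 5.2 step by step and change only the square of the generators.

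\textbf{Expansion.} I first note $\widetilde U_i = \cosh\frac{\chi_i}{2} - B_i\sinh\frac{\chi_i}{2}$, which follows from $\widetilde B_i = -B_i$ (Definition D2.2). Since $B_i^2=+1$, we have $U_i\widetilde U_i = \cosh^2 - \sinh^2 = 1$. Next I write $c_i := \cosh\frac{\chi_i}{2}$, $s_i := \sinh\frac{\chi_i}{2}$ and $a_S := \prod_{i\in S}s_i\prod_{i\notin S}c_i$. The expansion of Lemma 4.1 uses only the ordering of the factors, so it carries over verbatim:
\[P_k=\sum_S a_S B_S,\qquad Q_k=\sum_T(-1)^{|T|}a_T B_T .\]
This gives the analogue of (5.1) with the same double sum.

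\textbf{Sign-adapted lemmas.} I then re-derive the three tools for type $+1$. For Lemma 3.2, the recursion now reads $B_S^2=(-1)^{m-1}\cdot(+1)\cdot B_{S'}^2$, so $B_S^2=(-1)^{0+1+\cdots+(m-1)}=(-1)^{m(m-1)/2}$. Lemma 3.3 holds unchanged, since pairs now collapse to $+1$. The Parity Lemma 3.4 needs $B_i$ invertible with $B_i^{-1}=B_i$ and the conjugation relation $B_iB_jB_i^{-1}=-B_j$ for $j\neq i$. Both hold, so the argument via Corollary 2.2 goes through word for word, and $\langle B_D\rangle_0=0$ for every nonempty $D$ of even size.

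\textbf{Diagonal and off-diagonal parts.} The off-diagonal part vanishes exactly as in the proof of Theorem 5.2. Even $|S\triangle T|$ is handled by the adapted Lemma 3.4. Odd $|S\triangle T|$ is handled by pairing $(S,T)$ with $(T,S)$ via Lemma 2.1, because $(-1)^{|S|}+(-1)^{|T|}=0$. The diagonal term for $|S|=m$ contributes $(-1)^m a_S^2 (-1)^{m(m-1)/2}=(-1)^{m(m+1)/2}a_S^2$. This gives $\eta^+(m)=(-1)^{m(m+1)/2}$ and hence the subset formula.

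\textbf{Compact form.} Finally I check $\eta^+(m)=\operatorname{Re}[(1+\mathrm{i})\mathrm{i}^m]$ for $m=0,1,2,3$. The values of $(1+\mathrm{i})\mathrm{i}^m$ are $1+\mathrm{i}$, $-1+\mathrm{i}$, $-1-\mathrm{i}$, $1-\mathrm{i}$, with real parts $+,-,-,+$, and periodicity extends this to all $m$. Both $(-1)^{m(m+1)/2}$ and this real part depend only on $m \bmod 4$, so they agree. The factorization step of Corollary 6.1 then gives the product form, since it is purely formal and does not need $c_i^2+s_i^2=1$.

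\textbf{Main obstacle.} The real risk is sign bookkeeping, in particular the exponent $m(m-1)/2$ in $B_S^2$ and its combination with $(-1)^m$ from reversion. I would verify it directly at $k=1$: there $\Theta_1=1$, and the formula gives $c_1^2-s_1^2=1$, which is consistent.
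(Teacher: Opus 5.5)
Your proposal is correct and follows the paper's proof essentially verbatim. It uses the same subset expansion and the same three adaptations: the recursion giving $B_S^2=(-1)^{m(m-1)/2}$, the parity lemma with $B_i^{-1}=B_i$, and Lemma 3.3 unchanged. It then makes the same diagonal/off-diagonal split yielding $\eta^{+}(m)=(-1)^{m(m+1)/2}$, checks $\operatorname{Re}[(1+\mathrm{i})\mathrm{i}^m]$ for $m=0,\dots,3$, and factorizes as in Corollary 6.1; your remark that both sign expressions are $4$-periodic in $m$ makes explicit a step the paper leaves implicit.
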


\begin{proof}
The computation is verbatim that of Theorem 5.2, with
three adjustments. First, the recursion of Lemma 3.2 here gives
\(B_S^2 = (-1)^{m-1}(+1) B_{S'}^2\), so
\(B_S^2 = (-1)^{0+1+\cdots+(m-1)} = (-1)^{m(m-1)/2}\), whence
\(\eta^{+}(m) = (-1)^m (-1)^{m(m-1)/2} = (-1)^{m(m+1)/2}\). Second,
Lemma 3.4 remains valid: \(B_i\) is invertible with \(B_i^{-1} = B_i\),
and the conjugation argument is unchanged. Third, Lemma 3.3 remains
valid, since only anticommutation and invertibility enter. The subset
expansion of \(U_i = \cosh\frac{\chi_i}{2} + B_i \sinh\frac{\chi_i}{2}\)
is structurally identical to Lemma 4.1. For the compact form one checks
\(\operatorname{Re}[(1+\mathrm{i})\mathrm{i}^m]\) for \(m = 0,1,2,3\):
\(1+\mathrm{i}\), \(-1+\mathrm{i}\), \(-1-\mathrm{i}\),
\(1-\mathrm{i}\), with real parts \(+1, -1, -1, +1\); this agrees with
\((-1)^{m(m+1)/2}\), and the factorization proceeds as in Corollary 6.1.

\end{proof}

\renewcommand{\theremark}{6.9}
\begin{remark}\label{remark:6-9}
 The only difference between the two cases is the
global prefactor, \(1-\mathrm{i}\) versus \(1+\mathrm{i}\), i.e., a
rotation by \(\pm\frac{\pi}{4}\). The sign sequence of one case is the
other shifted by one position. Since in the hyperbolic case
\(\alpha_i = \arctan(\tanh^2\frac{\chi_i}{2}) \in [0, \frac{\pi}{4})\)
and the zero condition reads
\(\sum_i \alpha_i \in \frac{\pi}{4} + \pi\mathbb{Z}\), here too \(k=1\)
is zero-free and every \(k \geq 2\) is not.

\end{remark}

\subsection{The Mixed Case}\label{the-mixed-case}

\begin{quote}
Theorem 5.2 treats families of type \(-1\), Theorem 6.8 those of type
\(+1\). Both are special cases of a single theorem in which each family
member carries its own type.
\end{quote}

\renewcommand{\thetheorem}{6.10}
\begin{theorem}[Mixed Families]\label{theorem:6-10}
 Let \(B_1, \ldots, B_k\) be
bivectors of a real Clifford algebra with
\[B_i^2 = \varepsilon_i \in \{-1, +1\}, \qquad B_i B_j = -B_j B_i \quad (i \neq j),\]
e.g., \(B_i = C S_i\) with \(C^2 = -1\), \(S_i^2 = \varepsilon_i\) and
pairwise anticommutation. For each \(i\) let a parameter be given,
\(\theta_i\) for \(\varepsilon_i = -1\) or \(\chi_i\) for
\(\varepsilon_i = +1\), and
\[U_i = \begin{cases} \cos\dfrac{\theta_i}{2} + B_i \sin\dfrac{\theta_i}{2}, & \varepsilon_i = -1,\\[6pt] \cosh\dfrac{\chi_i}{2} + B_i \sinh\dfrac{\chi_i}{2}, & \varepsilon_i = +1, \end{cases}\]
as well as
\(\Theta_k = (U_1 \cdots U_k)(\widetilde U_1 \cdots \widetilde U_k)\).
Setting \(w_i = \sin^2\frac{\theta_i}{2}\) or
\(w_i = \sinh^2\frac{\chi_i}{2}\),
\[\boxed{\;\langle \Theta_k \rangle_0 = \sum_{S \subseteq \{1,\ldots,k\}} (-1)^{|S|(|S|+1)/2} \prod_{i \in S} \varepsilon_i \, w_i \prod_{i \notin S} (1 + \varepsilon_i w_i).\;}\]

\end{theorem}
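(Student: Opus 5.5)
The plan is to follow the proof of Theorem 5.2 verbatim, keeping track of the individual squares $\varepsilon_i$. Write $U_i = c_i + s_i B_i$ with $(c_i,s_i) = (\cos\frac{\theta_i}{2}, \sin\frac{\theta_i}{2})$ or $(\cosh\frac{\chi_i}{2}, \sinh\frac{\chi_i}{2})$. In both cases $c_i^2 = 1 + \varepsilon_i w_i$ and $s_i^2 = w_i$, since $\cos^2 = 1 - \sin^2$ and $\cosh^2 = 1 + \sinh^2$. Because $\widetilde B_i = -B_i$, we have $\widetilde U_i = c_i - s_i B_i$.

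The subset expansion of Lemma 4.1 then goes through unchanged:
\[P_k = \sum_S a_S B_S, \qquad Q_k = \sum_T (-1)^{|T|} a_T B_T, \qquad a_S = \prod_{i\in S} s_i \prod_{i\notin S} c_i.\]
This gives the double sum (5.1) again.

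\textbf{Diagonal terms.} The recursion of Lemma 3.2 becomes $B_S^2 = (-1)^{m-1}\varepsilon_{i_1} B_{S'}^2$, so
\[B_S^2 = (-1)^{m(m-1)/2}\prod_{i\in S}\varepsilon_i.\]
The diagonal contribution is therefore
\[(-1)^m (-1)^{m(m-1)/2}\Big(\prod_{i \in S}\varepsilon_i\Big) a_S^2 = (-1)^{m(m+1)/2}\prod_{i\in S}\varepsilon_i w_i \prod_{i\notin S}(1+\varepsilon_i w_i),\]
using $m + m(m-1)/2 = m(m+1)/2$. This is exactly the boxed summand.

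\textbf{Off-diagonal terms.} Lemma 3.3 holds as before, now with the sign $\sigma(S,T)$ also absorbing the factors $\varepsilon_i$ for $i \in S\cap T$. Its proof uses only anticommutation and the fact that each $B_i^2$ is a scalar $\pm1$. Lemma 3.4 holds because each $B_i$ is invertible with $B_i^{-1} = \varepsilon_i B_i$. The conjugation $B_i B_j B_i^{-1} = -B_j$ for $j\neq i$ still follows from anticommutation, so Corollary 2.2 kills every $\langle B_D\rangle_0$ with $|D|$ even and nonzero. For $|D|$ odd, the pairing $(S,T)\leftrightarrow(T,S)$ with Lemma 2.1 gives the bracket $(-1)^{|S|}+(-1)^{|T|}=0$, exactly as in Theorem 5.2.

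The only point needing care, and the one I expect to be the main obstacle, is making sure nothing in Lemmas 3.2–3.4 secretly used $B_i^2=-1$ beyond invertibility and scalar squares. I would recheck each of them with $\varepsilon_i$ kept symbolic. As a sanity check, all $\varepsilon_i=-1$ recovers (6.1), since $(-1)^{m(m+1)/2}(-1)^m=\eta(m)$. All $\varepsilon_i=+1$ recovers Theorem 6.8, since $1+w_i=\cosh^2\frac{\chi_i}{2}$.
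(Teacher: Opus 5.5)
Your proposal is correct and follows the paper's proof essentially step for step: the same subset expansion, the modified square $B_S^2 = (-1)^{m(m-1)/2}\prod_{i\in S}\varepsilon_i$, the parity lemma via $B_i^{-1} = \varepsilon_i B_i$, the unchanged pairing argument for odd $|S \triangle T|$, and $c_i^2 = 1+\varepsilon_i w_i$ to rewrite $a_S^2$. The point you flag as the main obstacle is indeed harmless, as the paper also notes: Lemmas 2.1 and 3.2--3.4 use only anticommutation, scalar squares, and invertibility, never the specific value $B_i^2 = -1$.
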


\begin{proof}
The computation is verbatim that of Theorem 5.2 or Theorem
6.8, with a single addition: each family member carries its own sign.
The recursion of Lemma 3.2 here gives
\[B_S^2 = (-1)^{|S|(|S|-1)/2} \prod_{i \in S} \varepsilon_i;\] the
parity lemma 3.4 remains valid, since \(B_i\) is invertible with
\(B_i^{-1} = \varepsilon_i B_i\) and the conjugation argument runs
unchanged; Lemma 3.3 and the pairing argument remain unchanged as well.
The diagonal contribution of \(S\) with \(|S| = m\) is thus
\[(-1)^{m} \, a_S^2 \, \langle B_S^2 \rangle_0 = (-1)^{m(m+1)/2} \prod_{i \in S} \varepsilon_i \; a_S^2,\]
where
\(a_S^2 = \prod_{i \in S} w_i \prod_{i \notin S} (1 + \varepsilon_i w_i)\),
since for \(\varepsilon_i = -1\) we have \(\cos^2 = 1 - \sin^2\) and for
\(\varepsilon_i = +1\) we have \(\cosh^2 = 1 + \sinh^2\). Thus the sum
of the diagonal contributions yields exactly the claimed subset formula.

\end{proof}

\renewcommand{\theremark}{6.11}
\begin{remark}[The Pure Cases as Special Cases]\label{remark:6-11}
 For
\(\varepsilon_i = +1\) for all \(i\), the subset formula of Theorem 6.10
is verbatim that of Theorem 6.8. For \(\varepsilon_i = -1\) for all
\(i\), Theorem 5.2 arises: the factor
\[(-1)^{|S|(|S|+1)/2} \prod_{i \in S} (-1) = (-1)^{|S|(|S|+3)/2} = \eta(|S|)\]
is then exactly the sign law from Definition D5.1, and outside \(S\),
\(1+\varepsilon_i w_i = 1-w_i\). The two pure cases each possess their
own particularly simple compact form, Corollary 6.1 or Theorem 6.8. For
mixed families, the subset formula provides a uniform closed
representation.

\end{remark}

\renewcommand{\theremark}{6.12}
\begin{remark}[No Bound]\label{remark:6-12}
 A bound as in Corollary 6.4 does not
exist in the mixed case: the hyperbolic weights
\(w_i = \sinh^2\frac{\chi_i}{2}\) are unbounded, and
\(|\langle \Theta_k \rangle_0|\) grows with them beyond any value. The
polar form and the zero criterion of Chapters 6.3 and 6.4 therefore do
not carry over directly.

\begin{center}\rule{0.5\linewidth}{0.5pt}\end{center}

\pagebreak

\end{remark}

\section{Why All This}\label{why-all-this}

\begin{quote}
This chapter justifies the work, names its computational utility, its
possible points of connection, and openly its origin. It contains no
theorems.
\end{quote}

\subsection{What Is Measured}\label{what-is-measured}

The quantity \(\langle \Theta_k \rangle_0\) is a scalar measure for a
very specific effect: that order matters in a rotor product. It is
constructed so that it would be identically \(1\) in the commutative
situation and is trivially \(1\) in the reverse-ordered situation. What
it captures is exclusively the difference between ``first \(U_1\), then
\(U_2\), then back in the same order'' and ``first \(U_1\), then
\(U_2\), then back in reverse order.''

The family of pairwise anticommuting unit bivectors is the extreme case
for this. Anticommutation is the maximal possible non-commutativity
between two elements with unit square: not a part, but the entire
product reverses sign. One thus obtains the order defect not in an
arbitrary, but in the sharpest configuration, and obtains it there in
closed form for arbitrarily many factors. This is the mathematical
content of the work.

Three features of the result appear remarkable to us, independent of any
application.

First, the \emph{structure of the coupling}. The compact form shows that
each family member contributes exactly one complex factor and that all
coupling stems solely from the real-part projection at the end.
Corollary 6.2 turns this into a statement: separability is impossible
for \(k \geq 2\), and not because of an unfavorable parametrization but
structurally. The polar form additionally says where the coupling
resides: in a sum of angles under a single cosine.

Second, the \emph{contrast with the chain}. For the same family, the
scalar part of the simple chain factorizes completely,
\(\langle P_k \rangle_0 = \prod_i \cos\frac{\theta_i}{2}\), under the
star condition. There are thus two natural scalar quantities for the
same configuration, and they behave oppositely: one separates, the other
does not. Whoever infers independence of the parameters from the one is
refuted by the other. Independence is thus not a property of the family
but of the observed quantity. We consider this the actual lesson of the
computation.

Third, the \emph{unexpected distribution of robustness}. The simpler
formula requires an additional hypothesis, the more complicated one does
not. Remark 4.4 gives a family for which Theorem 4.2 is false and
Theorem 5.2 is true. The reason is readable in the proof of Theorem 5.2:
the perturbation terms of the loop come in pairs whose signs cancel for
a parity reason, while the chain has only individual terms and therefore
carries every perturbation term unprotected.

\subsection{Computational Utility}\label{computational-utility}

The most immediate utility of the work is a complexity argument, and it
can be quantified exactly.

Whoever determines \(\langle \Theta_k \rangle_0\) by direct expansion
must, by (5.1), evaluate a double sum over all subset pairs, i.e.,
\[2^k \cdot 2^k = 4^k\] products of subset products, each with a sign
determination. For \(k = 10\) that is \(1{,}048{,}576\) terms, for
\(k = 20\) already \(1{,}099{,}511{,}627{,}776\), i.e., about
\(10^{12}\). The compact form from Corollary 6.1 needs, by contrast,
\[k \text{ complex multiplications},\] i.e., for \(k = 20\) exactly
twenty, and one extraction of the real part. The effort drops from
exponential to linear. The polar form from Corollary 6.3 further reduces
to \(k\) square roots, \(k\) arctangent evaluations, one sum, and one
cosine, which is often the better form for error estimates and for
statements about sign changes.

This is not a mere bookkeeping advantage. It makes the quantity
accessible to symbolic and numerical systems of geometric algebra in the
first place, especially in regimes with many factors, where direct
evaluation is not only slow but also numerically unreliable due to sign
catastrophes. Corollary 6.7 additionally provides a structural control
for implementations: in the symmetric case, every coefficient with
\(j \equiv 1 \pmod 4\) must vanish exactly. Whoever wants to test an
implementation thereby has a sharp test that needs neither reference
values nor tolerances.

\subsection{Where Such Families
Occur}\label{where-such-families-occur}

The configuration is not artificial. We name four points of connection,
in descending certainty, and explicitly mark where we make no claim.

\textbf{Geometrically: common direction in the rotation planes.} The
condition \(B_i = C S_i\) means that all rotation planes contain the
vector \(C\). In \(\mathrm{Cl}_{3,0}\) with \(C = e_1\), \(S_1 = e_2\),
\(S_2 = e_3\) one obtains \(B_1 = e_1 e_2\) and \(B_2 = e_1 e_3\); these
are rotations about the mutually perpendicular axes \(e_3\) and \(e_2\).
These two anticommute and have square \(-1\), so Theorem 5.2 is
applicable. Worth noting is the contrast with the complete axis set
\(\{e_2 e_3,\, e_1 e_3,\, e_1 e_2\}\) of a Cardan suspension: that is
not a star family but the triangle family from Remark 4.4, for which
Theorem 4.2 becomes false while Theorem 5.2 holds.

\textbf{Algebraically: the existence question for large \(k\).} How many
pairwise anticommuting unit bivectors a given algebra carries is a
classical question. Via the star construction, the answer for
\(\mathrm{Cl}_{0,k+1}\) is trivially positive for every \(k\), so the
theorems of this work know no dimensional bound. If one instead wants to
realize anticommuting families in a \emph{given}, small algebra or in a
matrix algebra of small size, one enters the domain of Hurwitz-Radon
theory and its relatives \citep{Eckmann1994,Hrubes2016}. We make no own
statement on this.

\textbf{Operator-theoretically: anticommuting sets.} Sets of pairwise
anticommuting operators with unit square occur as a basic building block
in several fields, for instance as Majorana or generalized Pauli
systems, and their combinatorial structure is the subject of dedicated
investigations \citep{Curtright2014,Shirokov2023,SarkarVanDenBerg2019}.
Where ordered products and their traces
occur there, the scalar part in the sense of this work is precisely the
normalized trace, as Appendix A exploits in its matrix check. We
consider it plausible that the closed form is useful there, but claim no
concrete application.

\textbf{As potential points of connection} we name, explicitly without
own results: rotor chains in kinematics and robotics, where ordered
products of rotations about intersecting axes occur; methods of
geometric algebra in signal processing and computer vision, where scalar
projections of rotor products serve as similarity measures; and quantum
information, where anticommuting operator sets and their ordered
products occur. In all three cases it would first have to be checked
whether the families occurring there satisfy the hypotheses. This is
open.

\subsection{Origin of the Question}\label{origin-of-the-question}

We state the origin openly, because it contributes to understanding and
because concealing it would improve nothing.

The configuration arose in the modeling of relation systems, in the
fifth volume of the series ``Winkel statt Zeit'' \citep{Jenzer2026}. There,
an arrangement with a center and \(k\) satellites bound to it is
considered; the binding to a common center is exactly the star
condition, and the theorems \(S5.10\) and \(S5.11\) there correspond to
our Theorem 4.2 and Theorem 5.2. The consequence drawn there, that
several satellites are independent for the chain but not for the loop,
is reproduced in this work as the contrast between Theorem 4.2 and
Corollary 6.2, freed from any interpretation.

This work takes over from the prior work nothing but the question.
Notation, terminology, hypotheses, proofs, and all corollaries of
Chapter 6 are newly set up here; Theorem 5.2 is proved in a stronger
version than there, and Remark 4.4, Theorem 6.8, and Chapters 6.3 to 6.7
have no counterpart in the prior work. Of the interpretations given
there, none is adopted and none is disputed; they are irrelevant to the
theorems of this work.

\subsection{On Terminology}\label{on-terminology}

A note on terminology, because it avoids misunderstandings. The prior
work calls \(\langle \Theta_k \rangle_0\) a ``holonomy.'' We avoid this
word. In differential geometry and gauge theory, holonomy denotes the
parallel transport of a connection form along a closed curve; here there
is no connection form, no basis over a space, and no transport, but a
product in an algebra. The echo would be misleading. We instead speak of
a star-ordered rotor loop and of its order defect. ``Loop'' means here
only that the word \(\Theta_k\) contains every generator with both signs
of the parameter and is closed in this purely combinatorial sense.
Should an interpretation as genuine holonomy emerge later, that would be
a separate result and not a renaming.

\begin{center}\rule{0.5\linewidth}{0.5pt}\end{center}

\pagebreak

\section{Distinction from Known Results and Open
Questions}\label{distinction-from-known-results-and-open-questions}

\begin{quote}
This chapter names what is standard, what we consider new, and where our
knowledge of the literature reaches its limits.
\end{quote}

\subsection{What Is Standard}\label{what-is-standard}

We claim none of the following. The apparatus of Clifford algebras,
reversion with its grade signs, and scalar projection are standard
\citep{Lounesto2001,Porteous1995,HestenesSobczyk1984,DoranLasenby2003,Hitzer2012}.
 Lemma 3.2 is known. The sign
\((-1)^{m(m+1)/2}\) appears in \citep[Eq.~3.4]{LundholmSvensson2009};
the structure with period four is developed in \citep[\S 3.2]{Shirokov2018} and
\citep{MarchukShirokov2008}. Lemma 3.3 is the elementary reduction rule
for blade products and is contained in every presentation. The
orthogonality of subset products with respect to the scalar form, i.e.,
the statement \(\langle B_S B_T \rangle_0 = 0\) for \(S \neq T\) under
suitable hypotheses, is almost literally contained in \citep[Theorem~14]{Shirokov2018} 
and is also used in \citep{Mandolesi2024}. We do not rely on it,
because our Lemma 3.4 obtains the weaker but sufficient statement for us
from a conjugation argument and can do without the star condition.
Whoever wishes to assume the stronger statement obtains Theorem 5.2 in
one fewer step, but under a stronger hypothesis. Theorem 4.2 for
\(k = 2\) is contained in \citep[Eq.~2.149]{DoranLasenby2003}. The
generalization is an immediate consequence; we consider it folklore. The
technique of writing a sign pattern with period four as the real part of
a product with a root of unity is classical combinatorics of lacunary
binomial sums; it goes back to Ramus and is presented in modern form in
Dobson (2017).

\subsection{What We Consider New}\label{what-we-consider-new}

To the best of our knowledge of the literature, the following points are
new. We formulate this with reservation and name the limits of our
search below.

\begin{itemize}
\tightlist
\item
  The identification of the object \(\Theta_k\), i.e., the same-order
  reversion product, as a quantity worthy of study in its own right,
  with the distinction from the trivial reverse-order version.
\item
  Theorem 5.2 with its closed subset formula and the sign law
  \(\eta(m) = (-1)^{\lfloor m/2 \rfloor}\) for arbitrary \(k\).
\item
  The compact form (Corollary 6.1) and the polar form (Corollary 6.3).
\item
  The complete zero criterion (Corollary 6.5) and the proof that
  \(k = 1\) is the only zero-free case.
\item
  The proof of non-separability for \(k \geq 2\) (Corollary 6.2).
\item
  The lacunary coefficient rule in the symmetric case (Corollary 6.7).
\item
  The finding that Theorem 5.2 holds without the star condition, but
  Theorem 4.2 does not, with the counterexample in Remark 4.4.
\item
  The hyperbolic supplementary theorem 6.8.
\item
  The permuted loop formula (Theorem 5.5) with the reduction of the
  reverse order to the limiting case of all positive signs (Corollary
  5.6). The conjecture in Remark 5.7 we do not present as a contribution
  but with explicit reservation as numerical evidence.
\item
  The generalization to mixed families with \(B_i^2 \in \{-1, +1\}\)
  through a uniform closed subset formula (Theorem 6.10).
\end{itemize}

We emphasize at the same time what this novelty is not. The proof of
Theorem 5.2 is elementary; it combines in a few lines what has long been
known: the blade square (Lemma 3.2), the symmetry of the scalar form
(Lemma 2.1), and the orthogonality of subset products, which appears in
stronger form in Shirokov (2018, Theorem 14). Our parity lemma 3.4
obtains the weaker form sufficient for us from a single conjugation
argument; the proof technique is standard, the precise statement in this
form we have not found in the literature and therefore present it as
Type 3. It would be inappropriate to claim a new method here. The
contribution lies in naming the object, computing it in closed form, and
sharply determining the scope of the hypotheses.

\subsection{Limits of Our Search}\label{limits-of-our-search}

We disclose what our literature search could not cover, so that the
readership can properly weigh the reservation in Chapter 8.2. The
conference proceedings of the ICCA and AGACSE series could not be
searched in full text, nor the complete volumes of the journal for
Clifford algebras; likewise, no systematic citation tracking via the
large mathematical reference databases was available. Several standard
works were accessible only via metadata and tables of contents, not in
full text. We have not found a prior publication of Theorems 5.2 and 6.8
in six search rounds with different formulations, but we explicitly
consider it possible that one exists.

\subsection{Open Fronts}\label{open-fronts}

\textbf{OF 1: Characterization of the Star Condition.} Every star family
satisfies the star condition (Lemma 3.5), the triangle family violates
it (Remark 4.4). For families of basis bivectors the characterization is
elementary: two basis bivectors anticommute if and only if they share
exactly one generator, and a family of two-element sets in which any two
sets share exactly one element is a star or a triangle; the star
satisfies the star condition, the triangle violates it. For basis
bivectors, therefore, star condition holds if and only if star
structure. Open is the general case of arbitrary bivectors: does every
anticommuting unit family with the star condition, up to isomorphism or
basis change, have the form \(B_i = C S_i\)? We conjecture yes, but have
not proved it.

\textbf{OF 2: Higher Grades.} The scalar projection is determined; the
projections of \(\Theta_k\) onto grades \(2, 4, \ldots\) are likewise
fixed by the family and could possess closed forms. That structure
exists is shown by the case \(k = 2\), whose full decomposition is
elementary: \[
\Theta_2 = (1 - 2 w_1 w_2) \;+\; w_2 \sin\theta_1 \, B_1 \;-\; w_1 \sin\theta_2 \, B_2 \;+\; \tfrac{1}{2} \sin\theta_1 \sin\theta_2 \, B_1 B_2.
\] The first open question is the bivector projection for general \(k\);
it would be interesting because it would provide the order defect not
just as a number, but as a rotation plane. A numerical observation
sharpens the question: in our samples the bivector part of \(\Theta_k\)
for \(k \le 3\) is always simple, and no pseudoscalar part occurs; from
\(k = 4\) a grade-4 part appears. The structural question thus arises
anew from \(k = 4\) qualitatively.

\textbf{OF 3: Powers and Longer Words.} Natural candidates are words of
the form \(P_k Q_k^n\) or \(P_k Q_k P_k Q_k^{-1}\). For powers the
situation is split: for \(k = 2\), \(\Theta_2\) is the group commutator
\(U_1 U_2 \widetilde{U_1} \widetilde{U_2}\) and, by the decomposition in
OF 2, a single-plane rotor;
\(\langle \Theta_2^n \rangle_0 = T_n(\langle \Theta_2 \rangle_0)\) holds
with the Chebyshev polynomial \(T_n\), and powers are already settled by
the scalar formula. We observe the same reduction numerically also for
\(k = 3\). From \(k = 4\), \(\langle \Theta_k^2 \rangle_0\) is no longer
determined by \(\langle \Theta_k \rangle_0\); a closed form for powers
would have to go through the principal angles of the rotor and is open,
as are the remaining longer words.

\begin{center}\rule{0.5\linewidth}{0.5pt}\end{center}

\pagebreak

\section{Appendix A:
Reproducibility}\label{appendix-a-reproducibility}

\begin{quote}
This appendix describes the accompanying verification script and names
the achieved accuracies. It replaces no proof but independently checks
every proof.
\end{quote}

\subsection{Purpose and Principle}\label{purpose-and-principle}

Every claim of this work is checkable by hand; the proofs of Chapters 3
to 6 are complete and short. Additionally we provide a verification
script that independently checks every statement, partly symbolically
and partly numerically, via three separate paths: through an own
implementation of Clifford multiplication on basis blades, through an
explicit complex matrix representation of the generators, and through an
independent symbolic evaluation of the closed formulas. Two of these
paths do not use the theorems of this work but compute the product
\(\Theta_k\) directly; the agreement is therefore not circular. The
principle is as follows: where a statement is exact, it is checked
symbolically and the difference must be identically zero. Where a
statement is only numerically accessible, it is checked over many random
parameter sets and the largest deviation encountered is reported.

\subsection{The Matrix Check}\label{the-matrix-check}

The numerical check is based on an explicit representation. For \(n\)
generators the script constructs \(n\) complex matrices of size
\(2^{\lceil n/2 \rceil}\) as Kronecker products of the Pauli matrices,
multiplied by \(\mathrm{i}\), so that each matrix has square \(-1\) and
any two of them anticommute. In this representation the scalar part of
an element equals its normalized trace, \[
\langle x \rangle_0 = \frac{1}{\dim} \operatorname{tr}(x),
\] which the script uses to compute \(\langle \Theta_k \rangle_0\). The
rotors are formed as matrices, the products \(P_k\) and \(Q_k\) are
multiplied out factor by factor, and the trace is compared with the
compact form from Corollary 6.1. The theorems of this work do not enter
this computation.

\subsection{The Checked Statements}\label{the-checked-statements}

Part A, symbolic check. For \(k = 1, \ldots, 7\), Theorem 4.2 and
Theorem 5.2 are checked with exact symbolic arithmetic on basis blades.
Additionally, the trivial identity introduced in Chapter 1.1, \[
(U_1 \cdots U_k)(\widetilde U_k \cdots \widetilde U_1) = 1,
\] is confirmed; it distinguishes the subject of this work from the
trivial version. All differences are exactly zero.

Part B, check cases. The expressions for \(k = 1, 2, 3\) given in
Chapter 6.5 are derived symbolically from the subset formula and
compared with the direct computation. Exact agreement.

Part C, matrix representation. For \(k = 1, \ldots, 10\) and three
random parameter sets each, the path from Chapter 9.2 is carried out.
Largest deviation encountered: \(4.441 \times 10^{-16}\), i.e., in the
range of double machine precision.

Part D, symmetric case. Corollary 6.6 is checked symbolically as a
polynomial identity in \(w\) for \(k = 1, \ldots, 12\). Exact agreement.

Part E, sign law. The three representations of \(\eta(m)\) from Lemma
5.1, namely \((-1)^{m(m+3)/2}\), \((-1)^{\lfloor m/2 \rfloor}\), and
\(\operatorname{Re}[(1-\mathrm{i})\mathrm{i}^m]\), are checked for
equality for \(m = 0, \ldots, 15\). Exact agreement, sign sequence
\(+,+,-,-\) with period four.

Part F, polar form, zeros, coefficients. Corollary 6.3 is checked over
2400 random parameter sets for \(k = 1, \ldots, 12\) against the compact
form; largest deviation \(1.277 \times 10^{-15}\). The identity
\(\rho_i^2 = 1 - \frac{1}{2}\sin^2\theta_i\) is symbolically confirmed,
difference exactly zero. The zero criterion from Corollary 6.5 is
checked at three explicitly constructed zeros.

\begin{longtable}{ccccc}
\toprule
$k$ & $\theta_i$ & $\sum_i \alpha_i$ & symmetric? & $\langle \Theta_k \rangle_0$ \\
\midrule
\endhead
\bottomrule
$2$ & $1.997875$ & $\frac{3\pi}{4}$ & yes & $1.1 \times 10^{-16}$ \\
$3$ & $\frac{\pi}{2}$ & $\frac{3\pi}{4}$ & yes & $3.3 \times 10^{-16}$ \\
$4$ & $2.302554$ & $\frac{7\pi}{4}$ & yes & $-4.4 \times 10^{-16}$ \\
\end{longtable}

The lacunary coefficient rule from Corollary 6.7 is checked symbolically
for \(k = 5, 9, 13\); the vanishing exponents are \(j = 1,5\),
\(1,5,9\), and \(1,5,9,13\), i.e., exactly \(j \equiv 1 \pmod 4\).

\begin{longtable}{lll}
\toprule
Checked statement & Families & Largest deviation \\
\midrule
\endhead
\bottomrule
Theorem 5.2, loop formula & all & $5.6 \times 10^{-16}$ \\
Theorem 4.2, chain formula, with common generator & & $1.1 \times 10^{-16}$ \\
Theorem 4.2, chain formula, without common generator & & $9.4 \times 10^{-1}$ \\
\end{longtable}

The loop formula thus holds without the star condition, the chain
formula does not. This confirms Theorem 5.2 in its general version and
Remark 4.4 in its function as a counterexample.

Part G, scope of hypotheses. The script enumerates in
\(\mathrm{Cl}_{0,n}\) for \(n = 3, \ldots, 7\) all families of basis
bivectors that anticommute pairwise, and checks both theorems for each
of them with ten random parameter sets each. The result sharply
separates the two theorems.

Part H, counterexample. For the triangle family in \(\mathrm{Cl}_{0,3}\)
it is confirmed: \(B_i^2 = -1\) for all three, pairwise anticommutation,
no common generator, and \(\langle B_1 B_2 B_3 \rangle_0 = -1\).

Part I, hyperbolic case. Theorem 6.8 is checked for \(k = 1, \ldots, 8\)
with 30 random parameter sets each in a representation with
\(B_i^2 = +1\); largest deviation \(9.992 \times 10^{-16}\).
Additionally,
\(\eta^+(m) = \operatorname{Re}[(1+\mathrm{i})\mathrm{i}^m]\) is exactly
confirmed for \(m = 0, \ldots, 11\).

Part J, permuted loop. Theorem 5.5 is checked for \(k = 1, \ldots, 6\)
on star families with thirty random parameter sets each and randomly
drawn permutations. The product \(\Theta_k\) is multiplied out factor by
factor and its scalar part is compared with the permuted loop formula;
the theorems of this work do not enter this computation. Additionally,
Corollary 5.6 is symbolically confirmed for the reverse permutation
\(\tau_{\mathrm{rev}}\). All differences are exactly zero or, in
numerical checks, in the range of double machine precision.

Part K, mixed families. Theorem 6.10 is checked for \(k = 1, \ldots, 8\)
with thirty random parameter sets each and randomly chosen types
\(\varepsilon_i \in \{-1, +1\}\). For \(\varepsilon_i = -1\)
trigonometric rotors are used, for \(\varepsilon_i = +1\) hyperbolic
rotors. The product \(\Theta_k\) is computed directly in a Clifford or
matrix representation and its scalar part is compared with the subset
formula from Theorem 6.10. The largest deviation encountered is
reported.

\subsection{Availability}\label{availability}

The script is a single file in Python 3 and requires no dependencies
beyond libraries for symbolic and numerical arithmetic. It runs without
arguments, outputs each check step with its result, and aborts with an
error message upon any violation of a checked statement. The run ends
with the line \emph{ALL CHECKS PASSED}. Manuscript and script are
deposited at the address
https://codeberg.org/Jenziner/star-ordered-rotor-loops and under the DOI
10.5281/zenodo.22159164, both under CC BY 4.0. We explicitly invite not
merely running the script but re-implementing the checked statements
independently; the proofs are short enough that this is feasible with
reasonable effort.

\begin{center}\rule{0.5\linewidth}{0.5pt}\end{center}

\pagebreak

\section{Lists}\label{lists}

\subsection{Glossary}\label{glossary}

\begin{itemize}
\tightlist
\item
  \textbf{Anticommuting unit family.} A finite family
  \(B_1, \ldots, B_k\) of elements of a real Clifford algebra with
  \(B_i B_j = -B_j B_i\) for \(i \neq j\) and uniform square
  \(B_i^2 = -1\) (type \(-1\)) or \(B_i^2 = +1\) (type \(+1\)).
\item
  \textbf{Leaf generator.} The generators \(S_1, \ldots, S_k\) of a star
  family, each assigned to exactly one \(B_i\).
\item
  \textbf{Triangle family.} The family \(e_1 e_2, e_1 e_3, e_2 e_3\) in
  \(\mathrm{Cl}_{0,3}\); an anticommuting unit family without a common
  generator and without the star condition.
\item
  \textbf{Mixed family.} An anticommuting unit family whose members
  carry different squares, \(B_i^2 = \varepsilon_i \in \{-1, +1\}\);
  subject of Theorem 6.10.
\item
  \textbf{Same-order reversion product.} The product
  \(Q_k = \widetilde U_1 \widetilde U_2 \cdots \widetilde U_k\), in
  which the indices have the same order as in \(P_k\). It is to be
  distinguished from the reversion
  \(\widetilde P_k = \widetilde U_k \cdots \widetilde U_1\).
\item
  \textbf{Inversion count.} The number \(\iota(\tau, S)\) of pairs in a
  subset \(S\) whose ascending order is reversed under a permutation
  \(\tau\) (Definition D5.2).
\item
  \textbf{Compact form.} The representation of the order defect as the
  real part of a product of complex numbers (Corollary 6.1).
\item
  \textbf{Lacunary coefficient rule.} The statement that in the
  symmetric case every fourth coefficient of the polynomial in \(w\)
  vanishes (Corollary 6.7).
\item
  \textbf{Order defect.} The scalar part \(\langle \Theta_k \rangle_0\);
  it measures how far the same-order reversion product deviates from
  canceling the forward product.
\item
  \textbf{Permuted loop.} The word
  \(\Theta_k(\tau) = P_k Q_k^{(\tau)}\), in which the reversion product
  carries an index order determined by the permutation (Definition D5.2
  and Theorem 5.5).
\item
  \textbf{Polar form.} The representation of the order defect as a
  product of real magnitudes with a single cosine of an angle sum
  (Corollary 6.3).
\item
  \textbf{Scalar projection.} The linear map \(\langle \cdot \rangle_0\)
  that assigns to an element its grade-zero component.
\item
  \textbf{Star condition.} The requirement \(\langle B_S \rangle_0 = 0\)
  for every non-empty subset \(S\). It is needed for Theorem 4.2 and not
  for Theorem 5.2.
\item
  \textbf{Star family.} A family \(B_i = C S_i\) from a central
  generator \(C\) and \(k\) leaf generators \(S_i\), all pairwise
  anticommuting with square \(-1\).
\item
  \textbf{Star-ordered rotor loop.} The word \(\Theta_k = P_k Q_k\) from
  the ordered forward product and the same-order reversion product.
\item
  \textbf{Subset formula.} The representation of the order defect as a
  sum over all subsets of the index set (Theorem 5.2).
\item
  \textbf{Subset product.} The product \(B_S = B_{i_1} \cdots B_{i_m}\)
  over a subset \(S = \{i_1 < \cdots < i_m\}\) in ascending index order,
  with \(B_\emptyset = 1\).
\item
  \textbf{Period-four sign law.} The sequence
  \(\eta(m) = (-1)^{\lfloor m/2 \rfloor}\) with the pattern \(+,+,-,-\)
  (Lemma 5.1).
\item
  \textbf{Central generator.} The generator \(C\) occurring in all
  \(B_i\) of a star family. The word ``central'' refers to the star
  arrangement, not to the center of the algebra; \(C\) is not central in
  the algebraic sense.
\end{itemize}

\pagebreak

\subsection{Notation}\label{notation}

\begin{longtable}{ll}
\toprule
Symbol & Meaning \\
\midrule
\endhead
\bottomrule
\(\mathcal{C}\) & real Clifford algebra in which the computation takes
place \\
\(\mathrm{Cl}_{p,q}\) & real Clifford algebra with \(p\) generators of
square \(+1\) and \(q\) of square \(-1\) \\
\(C\) & central generator, \(C^2 = -1\) \\
\(S_i\) & leaf generator, \(S_i^2 = -1\) \\
\(B_i\) & unit bivector, \(B_i = C S_i\), \(B_i^2 = -1\) \\
\(k\) & number of bivectors or leaves \\
\(\theta_i\) & real parameter of the \(i\)-th rotor \\
\(\chi_i\) & real parameter in the hyperbolic case (Theorem 6.8) \\
\(c_i, s_i\) & shorthand for \(\cos\frac{\theta_i}{2}\) and
\(\sin\frac{\theta_i}{2}\) \\
\(w_i\) & weight \(\sin^2\frac{\theta_i}{2} \in [0,1]\) \\
\(U_i\) & rotor \(c_i + s_i B_i\) \\
\(\widetilde{x}\) & reversion of \(x\) \\
\(\widetilde{U}_i\) & \(c_i - s_i B_i\) \\
\(\langle x \rangle_0\) & scalar projection of \(x\) \\
\(P_k\) & ordered forward product \(U_1 U_2 \cdots U_k\) \\
\(Q_k\) & same-order reversion product
\(\widetilde U_1 \cdots \widetilde U_k\) \\
\(\Theta_k\) & star-ordered rotor loop \(P_k Q_k\) \\
\(S, T, D\) & subsets of \(\{1, \ldots, k\}\) \\
\(S \triangle T\) & symmetric difference \\
\(|S|\) & cardinality of \(S\) \\
\(B_S\) & subset product, \(B_\emptyset = 1\) \\
\(a_S\) & coefficient \(\prod_{i \in S} s_i \prod_{i \notin S} c_i\) \\
\(\sigma(S,T)\) & sign of the blade reduction, Lemma 3.3 \\
\(\tau\) & permutation of the index order in the reversion product,
Definition D5.2 \\
\(\tau_{\mathrm{rev}}\) & reverse permutation,
\(\tau_{\mathrm{rev}}(j) = k+1-j\) \\
\(\iota(\tau, S)\) & inversion count of \(\tau\) on \(S\), Definition
D5.2 \\
\(Q_k^{(\tau)}\) & permuted reversion product
\(\widetilde U_{\tau(1)} \cdots \widetilde U_{\tau(k)}\) \\
\(\Theta_k(\tau)\) & permuted loop \(P_k Q_k^{(\tau)}\) \\
\(\varepsilon_i\) & sign type of the \(i\)-th family member,
\(B_i^2 = \varepsilon_i \in \{-1, +1\}\), Theorem 6.10 \\
\(\eta(m)\) & sign law \((-1)^{\lfloor m/2 \rfloor}\) \\
\(\eta^{+}(m)\) & sign law in the hyperbolic case,
\((-1)^{m(m+1)/2}\) \\
\(\rho_i\) & magnitude
\(\sqrt{\cos^4\frac{\theta_i}{2} + \sin^4\frac{\theta_i}{2}}\) \\
\(\alpha_i\) & angle
\(\arctan(\tan^2\frac{\theta_i}{2}) \in [0, \frac{\pi}{2})\) \\
\(\mathrm{i}\) & imaginary unit in \(\mathbb{C}\), not an element of
\(\mathcal{C}\) \\
\(\operatorname{Re}\) & real part of a complex number \\
\(\blacksquare\) & end of a proof \\
\end{longtable}

\pagebreak

\bibliographystyle{plainnat}
\bibliography{star-ordered-rotor-loops}

@article{Curtright2014,
  author = {Curtright, T. L. and Fairlie, D. B. and Zachos, C. K.},
  doi = {10.3842/SIGMA.2014.084},
  journal = {Symmetry, Integrability and Geometry: Methods and Applications},
  pages = {084},
  title = {A Compact Formula for Rotations as Spin Matrix Polynomials},
  volume = {10},
  year = {2014}
}

@article{Dobson2017,
  archiveprefix = {arXiv},
  author = {Dobson, J. B.},
  eprint = {1610.09361},
  journal = {International Journal of Mathematics and Computer Science},
  pages = {27--42},
  title = {A Matrix Variation on {R}amus's Identity for Lacunary Sums of Binomial Coefficients},
  volume = {12},
  year = {2017}
}

@book{DoranLasenby2003,
  author = {Doran, Chris and Lasenby, Anthony},
  doi = {10.1017/CBO9780511807497},
  publisher = {Cambridge University Press},
  title = {Geometric Algebra for Physicists},
  year = {2003}
}

@incollection{Eckmann1994,
  author = {Eckmann, Beno},
  booktitle = {The Hilton Symposium 1993},
  doi = {10.1090/crmp/006-02},
  pages = {23--35},
  publisher = {American Mathematical Society},
  series = {CRM Proceedings and Lecture Notes},
  title = {Hurwitz-{R}adon Matrices Revisited: From Effective Solution of the {H}urwitz Matrix Equations to {B}ott Periodicity},
  volume = {6},
  year = {1994}
}

@book{HestenesSobczyk1984,
  author = {Hestenes, David and Sobczyk, Garret},
  doi = {10.1007/978-94-009-6292-7},
  publisher = {Springer},
  series = {Fundamental Theories of Physics},
  title = {Clifford Algebra to Geometric Calculus: A Unified Language for Mathematics and Physics},
  volume = {5},
  year = {1984}
}

@article{Hitzer2012,
  archiveprefix = {arXiv},
  author = {Hitzer, Eckhard},
  eprint = {1306.1660},
  journal = {Journal of the Society of Instrument and Control Engineers},
  number = {4},
  pages = {338--350},
  title = {Introduction to {C}lifford's Geometric Algebra},
  volume = {51},
  year = {2012}
}

@article{Hrubes2016,
  author = {Hrube{\v{s}}, Pavel},
  doi = {10.1016/j.laa.2015.12.015},
  journal = {Linear Algebra and Its Applications},
  pages = {494--507},
  title = {On Families of Anticommuting Matrices},
  volume = {493},
  year = {2016}
}

@misc{Jenzer2026,
  author = {Jenzer, Nik},
  doi = {10.5281/zenodo.20769239},
  publisher = {Zenodo},
  title = {Winkel statt {Z}eit -- {B}and 5 -- Die Relationsalgebra},
  year = {2026}
}

@book{Lounesto2001,
  author = {Lounesto, Pertti},
  doi = {10.1017/CBO9780511526022},
  edition = {2nd},
  publisher = {Cambridge University Press},
  series = {London Mathematical Society Lecture Note Series},
  title = {Clifford Algebras and Spinors},
  volume = {286},
  year = {2001}
}

@misc{LundholmSvensson2009,
  archiveprefix = {arXiv},
  author = {Lundholm, Douglas and Svensson, Lars},
  eprint = {0907.5356},
  note = {Lecture notes},
  title = {Clifford Algebra, Geometric Algebra, and Applications},
  year = {2009}
}

@article{Mandolesi2024,
  author = {Mandolesi, Andr{\'e} L. G.},
  doi = {10.1007/s00006-024-01357-4},
  journal = {Advances in Applied Clifford Algebras},
  number = {5},
  pages = {Article 56},
  title = {Multivector Contractions Revisited, {P}art {I}},
  volume = {34},
  year = {2024}
}

@article{MarchukShirokov2008,
  author = {Marchuk, Nikolai G. and Shirokov, Dmitry S.},
  doi = {10.1007/s00006-008-0066-y},
  journal = {Advances in Applied Clifford Algebras},
  pages = {237--254},
  title = {Unitary Spaces on {C}lifford Algebras},
  volume = {18},
  year = {2008}
}

@book{Porteous1995,
  author = {Porteous, Ian R.},
  doi = {10.1017/CBO9780511470912},
  publisher = {Cambridge University Press},
  title = {Clifford Algebras and the Classical Groups},
  year = {1995}
}

@misc{SarkarVanDenBerg2019,
  archiveprefix = {arXiv},
  author = {Sarkar, Ritajit and van den Berg, Ewout},
  doi = {10.48550/arXiv.1909.08123},
  eprint = {1909.08123},
  title = {On Sets of Commuting and Anticommuting {P}aulis},
  year = {2019}
}

@incollection{Shirokov2018,
  author = {Shirokov, Dmitry},
  booktitle = {Geometry, Integrability and Quantization},
  doi = {10.7546/giq-19-2018-11-53},
  editor = {Mladenov, Ivail{\`o} M. and Yoshioka, Akira},
  pages = {11--53},
  publisher = {Avangard Prima},
  title = {Clifford Algebras and Their Applications to {L}ie Groups and Spinors},
  volume = {19},
  year = {2018}
}

@article{Shirokov2023,
  author = {Shirokov, Dmitry},
  doi = {10.3390/math11163607},
  journal = {Mathematics},
  number = {16},
  pages = {3607},
  title = {Development of the Method of Averaging in {C}lifford Geometric Algebras},
  volume = {11},
  year = {2023}
}

\end{document}